\newtheorem{theorem}[subsection]{Theorem}
\newtheorem{thm}[subsection]{Theorem}
\newtheorem{defn}[subsection]{Definition}
\newtheorem{prop}[subsection]{Proposition}
\newtheorem{lemma}[subsection]{Lemma}
\newtheorem{remark}[subsection]{Remark}
\newtheorem{rem}[subsection]{Remark}
\theoremstyle{definition}
\newtheorem{example}[subsection]{Example}
\newcommand{\cat}{\mathcal}
\newcommand{\R}{\mathbb R}
\newcommand{\Q}{\mathbb Q}
\newcommand{\Z}{\mathbb Z}
\newcommand{\N}{\mathbb N}
\newcommand{\C}{\mathbb C}
\newcommand{\CP}{\mathbb P}
\newcommand{\T}{\mathbb T}
\newcommand{\ft}{{\mathfrak{t}}}
\newcommand{\ve}{\vec{e}}
\newcommand{\vd}{\vec{d}}
\newcommand{\vo}{\vec{0}}
\newcommand{\Ii}{{\cat I}}
\newcommand{\Oo}{{\cat O}}
\newcommand{\Ll}{{\cat L}}
\newcommand{\Xx}{{\cat X}}
\newcommand{\om}{{\omega}}
\newcommand{\de}{{\delta}}
\newcommand{\la}{{\lambda}}
\DeclareMathOperator{\Diff}{Diff}
\DeclareMathOperator{\Hess}{Hess}
\DeclareMathOperator{\Det}{Det}
\newcommand{\op}{{\overline{\partial}}}
\newcommand{\p}{{\partial}}
\newcommand{\cp}{{{\C\CP}\,\!}}
\begin{document}

\title[K\"ahler-Sasaki geometry of toric symplectic cones]
{K\"ahler-Sasaki geometry of toric symplectic cones in action-angle coordinates}
\author{Miguel Abreu}
\address{Centro de An\'{a}lise Matem\'{a}tica, Geometria e Sistemas
Din\^{a}micos, Departamento de Matem\'atica, Instituto Superior T\'ecnico, Av.
Rovisco Pais, 1049-001 Lisboa, Portugal}
\email{mabreu@math.ist.utl.pt}

\thanks{Partially supported by the Funda\c c\~ao para a Ci\^encia e a 
Tecnologia (FCT/Portugal)}

\date{\today}

\begin{abstract}
In the same way that a contact manifold determines and is determined by a 
symplectic cone, a Sasaki manifold determines and is determined by a suitable 
K\"ahler cone. K\"ahler-Sasaki geometry is the geometry of these cones. 

This paper presents a symplectic action-angle coordinates approach to toric 
K\"ahler geometry and how it was recently generalized, by Burns-Guillemin-Lerman 
and Martelli-Sparks-Yau, to toric K\"ahler-Sasaki geometry. It also describes, 
as an application, how this approach can be used to relate a recent new family 
of Sasaki-Einstein metrics constructed by Gauntlett-Martelli-Sparks-Waldram 
in 2004, to an old family of extremal K\"ahler metrics constructed by Calabi 
in 1982. 
\end{abstract}

\keywords{toric symplectic cones; action-angle coordinates; symplectic 
potentials; K\"ahler, Sasaki and Einstein metrics}

\maketitle

\section{Introduction}
\label{s:intro}

This paper presents a particular symplectic approach to understand
the work of Boyer-Galicki~\cite{BG}, Lerman~\cite{Le}, 
Gauntlett-Martelli-Sparks-Waldram~\cite{GW1, GW2}, 	  	  
Burns-Guillemin-Lerman~\cite{BGL2} and 
Martelli-Sparks-Yau~\cite{MSY}, 
regarding the following general geometric set-up:
\vspace*{0.5cm}
\begin{center}
\ 
\xymatrix{
\ar@{}[r]|<*\txt{\footnotesize{toric}\\\footnotesize{contact form}\\\footnotesize{Sasaki}\\\footnotesize{Einstein}} &
N^{2n+1} \ar[r]^-{\text{symplect.}} \ar[dd]|{\text{quotient}} & 
M^{2(n+1)}=N\times\R \ar[ddl]|{\text{reduction}} & 
& \ar@{}[l]|*\txt{\footnotesize{toric}\\\footnotesize{symplectic cone}\\\footnotesize{K\"ahler}\\\footnotesize{Ricci-flat}}\\
\\
\ar@{}[r]|<*\txt{\footnotesize{toric}\\\footnotesize{symplectic}\\\footnotesize{K\"ahler}\\\footnotesize{Einstein}} & B^{2n}= N/K  }
\end{center}
\vspace*{0.5cm}
The basic example is given by $B=\cp^n$ with the Fubini-Study metric,
$N = S^{2n+1}$ with the round metric and $M = \R^{2(n+1)}\setminus\{0\}$ 
with the flat Euclidean metric.

Let us start with a few comments on the top row of this diagram.
A \emph{contact} manifold determines, via \emph{symplectization}, and is 
determined, via $\R$-quotient, by a \emph{symplectic cone}. Hence, 
contact geometry can be thought of as the $\R$-invariant or $\R$-equivariant 
geometry of symplectic cones. Similarly, a \emph{Sasaki} manifold determines 
and is determined by a suitable \emph{K\"ahler cone}. Hence, Sasaki geometry 
can be thought of as the $\R$-invariant or $\R$-equivariant geometry of these 
cones and that is what we mean by \emph{K\"ahler-Sasaki geometry}.

Recall that the symplectization $M$ of a (co-oriented) contact manifold
$N$ is diffeomorphic to $N\times \R$, but not in a canonical way. The
choice of a contact form on $N$ gives rise to a choice of such a splitting
diffeomorphism. Since any Sasaki manifold comes equipped with a contact
form, any K\"ahler-Sasaki cone comes equipped with a splitting diffeomorphism.

In our symplectic approach, a suitable K\"ahler cone is a symplectic cone
equipped with what we will call a \emph{Sasaki complex structure}, 
i.e. a suitable compatible complex structure. Such a cone will be called a 
\emph{K\"ahler-Sasaki cone} and the corresponding K\"ahler metric will be 
called a \emph{K\"ahler-Sasaki metric}.

When a K\"ahler-Sasaki metric is \emph{Ricci-flat}, the associated Sasaki 
metric is \emph{Einstein} with positive scalar curvature. There is a lot 
of interest on \emph{Sasaki-Einstein metrics} due to their possible relation 
with superconformal field theory via the conjectural AdS/CFT correspondence. 
For example, the above mentioned work of Gauntlett-Martelli-Sparks-Waldram, 
a group of mathematical physicists, is motivated by this.

Regarding the left column of the above diagram, recall that a choice of a
\emph{contact form} on a contact manifold $N$ gives rise to a 
\emph{Reeb vector field} $K$. Denote also by $K$ the contact $\R$-action 
given by its flow. The \emph{quotient} $B:=N/K$, when suitably defined, is a 
symplectic singular space. When $N^{2n+1}$ is Sasaki (resp. Sasaki-Einstein 
with scalar curvature $= n(2n+1)$), the Reeb vector field $K$ generates an
isometric flow and the quotient $B^{2n}$ is K\"ahler (resp. 
\emph{K\"ahler-Einstein} with scalar curvature $= 2n(n+1)$).

As Boyer-Galicki point out in the Preface of their recent book~\cite{BG1}, 
Sasaki geometry of $N$ is then naturally ``sandwiched" between two K\"ahler
geometries:
\begin{itemize}
\item[(i)] the K\"ahler geometry of the associated symplectic cone $M$;
\item[(ii)] the K\"ahler geometry of the base symplectic quotient $B$.
\end{itemize}
As it turns out, there is a direct symplectic/K\"ahler way to go from (i) to
(ii): \emph{symplectic/K\"ahler reduction}. That is why the symplectic 
approach of this paper will mostly forget $N$ and use only the diagonal part 
of the above diagram, i.e. $M$, $B$ and the reduction arrow between the two.

The word \emph{toric} implies that $M$ and $B$ admit a combinatorial
characterization via the images of the moment maps for the corresponding
torus actions:
\begin{itemize}
\item[(i)] a \emph{polyhedral cone} $C\subset\R^{n+1}$ for the toric symplectic cone 
$M^{2(n+1)}$;
\item[(ii)] a \emph{convex polytope} $P\subset\R^n$ for the toric symplectic space
$B^{2n}$.
\end{itemize}
The \emph{symplectic reduction} relation between $M$ and $B$ corresponds to $C$ being
a cone over $P$.

The word toric also implies that, in suitable symplectic 
\emph{action-angle coordinates}, the relevant compatible complex structures
on $M$ and $B$ can be described via \emph{symplectic potentials}, i.e. appropriate 
real functions on $C$ and $P$. It follows from a theorem of 
Calderbank-David-Gauduchon~\cite{CDG} that the \emph{K\"ahler reduction} relation 
between $M$ and $B$ gives rise to a direct explicit relation between the corresponding
symplectic potentials on $C$ and $P$. As an application, we can use this to show
that a particular family of K\"ahler-Einstein spaces, contained in a more general 
family of local $U(n)$-invariant extremal K\"ahler metrics constructed by Calabi in 1982~\cite{C2}, gives rise to Ricci-flat K\"ahler-Sasaki metrics on certain toric 
symplectic cones.

More precisely, let $n$, $m$ and $k$ be integers such that
\[
n\geq 2\,,\quad k\geq 1 \quad\text{and}\quad 0\leq m < kn\,.
\]
Consider the cone $C(k,m)\subset\R^{n+1}$ with $n+2$ facets defined by 
the following normals:
\begin{align}
\nu_i & = \left(\ve_i, 1\right)\,,\ i=1, \ldots, n-1\,; \notag \\
\nu_{n} & = \left((m+1)\ve_n - \vd, 1\right)\,; \notag \\
\nu_- & = \left( k \ve_n, 1 \right)\,; \notag \\
\nu_+ & = \left(- \ve_n, 1\right)\,; \notag 
\end{align}
where 
\[
\text{$\ve_i\in\R^n\,,\ i=1,\ldots,n$, are the canonical basis vectors and}
\ \vd = \sum_{i=1}^n \ve_i \in \R^n\,.
\]
Each of these cones $C(k,m)\subset\R^{n+1}$ is good, in the sense of
Definition~\ref{def:gcone}, hence defines a toric symplectic cone $M_{k,m}^{2(n+1)}$. 
Because their defining normals lie on a fixed hyperplane in $\R^{n+1}$, the first 
Chern class of all these symplectic cones is zero. 
\begin{thm}\label{thm:main}
When
\begin{equation} \label{cond:kse}
\frac{(k-1)n}{2} < m < kn
\end{equation}
the toric symplectic cone $M_{k,m}^{2(n+1)}$ has a Ricci-flat K\"ahler-Sasaki metric.
The corresponding reduced toric K\"ahler-Einstein space belongs to Calabi's family.
\end{thm}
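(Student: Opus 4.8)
The plan is to run the entire argument at the level of \emph{symplectic potentials}. By the action--angle framework recalled above (Burns--Guillemin--Lerman, Martelli--Sparks--Yau), a toric Kähler--Sasaki metric on $M_{k,m}^{2(n+1)}$ amounts to a choice of Reeb vector $b$ in the interior of the dual cone $C(k,m)^{*}$ together with a symplectic potential $g$ on $C(k,m)$ having the logarithmic boundary behaviour along the facets prescribed by the normals $\nu_{i}$ (and the homogeneity forced by the conical structure). Since the $\nu_{i}$ lie on a hyperplane, $c_{1}=0$, so such a metric is Ricci-flat precisely when $g$ solves the toric Calabi--Yau Monge--Ampère equation $\det(\Hess g)^{-1}=$ (the explicit function of $b$ and the $\nu_{i}$ that encodes $c_{1}=0$). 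Among all Reeb vectors the one admitting a solution is singled out by the Martelli--Sparks--Yau volume minimization; because $C(k,m)$ is invariant under the group $S_{n-1}$ permuting $\ve_{1},\dots,\ve_{n-1}$, this $b$ lies in the fixed subspace $\{(\beta,\dots,\beta,s,t)\}\subset\R^{n+1}$, and the Calabi--Yau charge normalization then pins it down. So the steps are: (1) determine $b$; (2) transport the problem to the quotient polytope $P(k,m)=C(k,m)\cap\{\langle\,\cdot\,,b\rangle=\tfrac12\}$ via Calderbank--David--Gauduchon, where it becomes the toric Kähler--Einstein equation on $P(k,m)$; (3) solve it; (4) recognize the answer in Calabi's list~\cite{C2} and lift it back to $C(k,m)$.

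For the reduction to an ODE I would first compute the facet normals of $P(k,m)$, namely the rescaled projections of the $\nu_{i}$: they exhibit $P(k,m)$ as a polytope fibering affinely over an interval (the $x_{n}$-direction) with $(n-1)$-simplex fibers whose edge lengths vary linearly along the interval. Hence $B(k,m)^{2n}$ is the total space of a (weighted) $\cp^{\,n-1}$-bundle over a weighted $\cp^{\,1}$ (a spindle), which is exactly the class of toric orbifolds underlying Calabi's $U(n-1)$-invariant momentum construction. On such a polytope the toric Kähler--Einstein equation --- in integrated form, $\log\det(\Hess\bar g)+2\lambda\,\bar g$ affine on $P(k,m)$, with $\lambda$ fixed by the $c_{1}=0$ normalization of step (1) --- separates: writing $\bar g$ as the standard simplex potential in $x_{1},\dots,x_{n-1}$ plus a one-variable function of $x_{n}$, everything collapses to a single second-order ODE in $x_{n}$, which I would integrate explicitly (it yields, as in Calabi's ansatz, a rational/algebraic profile function).

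The numerical range then emerges from the boundary analysis. The ODE has a family of solutions; the one defining an honest symplectic potential is determined by matching the Guillemin-type logarithmic behaviour along each of the $n+2$ facets of $P(k,m)$ and by positivity of $\Hess\bar g$ on the interior. Imposing the endpoint conditions at the two ends of the $x_{n}$-interval and unwinding the algebra, compatibility of the Einstein constant $\lambda$ with the facet data of $P(k,m)$ forces the defining polynomial of the solution to have its relevant roots inside the interval with the correct multiplicities --- which happens exactly when $\tfrac{(k-1)n}{2}<m<kn$ (the upper bound is already part of the standing hypothesis, so the real content is the lower bound $m>\tfrac{(k-1)n}{2}$). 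Feeding the resulting $\bar g$ back through the Calderbank--David--Gauduchon formula produces a symplectic potential $g$ on $C(k,m)$, hence the asserted Ricci-flat Kähler--Sasaki metric on $M_{k,m}$; and by construction the transverse reduced Kähler--Einstein metric on $B(k,m)$ is precisely the member of Calabi's family that the momentum construction gives on that orbifold.

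The main obstacle is this endpoint analysis: one must verify that the explicit ODE solution extends to a symplectic potential with exactly the prescribed singular behaviour at \emph{both} ends of the $x_{n}$-interval and with positive-definite Hessian throughout the open polytope, and that these requirements are simultaneously satisfiable exactly on the range $\tfrac{(k-1)n}{2}<m<kn$. A secondary delicate point is correctly identifying the volume-minimizing Reeb vector $b$ --- equivalently the Einstein constant $\lambda$ --- and confirming that $P(k,m)$ is then the moment polytope of a genuine Fano orbifold in Calabi's family rather than a degenerate limit. Once these are settled, what remains is a long but routine computation with symplectic potentials, Abreu's curvature formulas, and the explicit Calderbank--David--Gauduchon reduction.
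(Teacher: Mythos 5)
Your proposal runs the construction ``forwards'' (fix $C(k,m)$, find the Reeb vector by Martelli--Sparks--Yau volume minimization, reduce to the characteristic polytope, solve the toric K\"ahler--Einstein equation there as an ODE, and recognize Calabi's ansatz), whereas the paper runs it ``backwards'': it starts from Calabi's explicit $1$-parameter family of local K\"ahler--Einstein potentials $s_A$, uses the partial-fraction decomposition of $h_A''$ to read off the polytope $P_A$ on which $s_A$ has the required Guillemin-type boundary behaviour, lifts $s_A$ to a Ricci-flat potential $\tilde s_A$ on the standard cone $C_A$ over $P_A$ via the Boothby--Wang formula~(\ref{eq:BW-pot}), and then writes down an explicit $T\in GL(n+1,\R)$ identifying $C_A$ with $C(k,m)$. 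On that route your two ``delicate points'' evaporate: the Reeb vector is $(\vo,1)$ in the $C_A$-coordinates by construction, no volume minimization is needed, and the boundary/positivity analysis is already packaged in Calabi's solution. The range in the theorem then appears transparently: the matching condition for $T$ is $\la_A=\frac{kn-m}{n+m}$, where $\la_A=\frac{b}{a}\cdot\frac{n-(n+1)a}{n+(n+1)b}$ sweeps out exactly the interval $(0,1)$ as the Calabi parameter $A$ varies, and $0<\frac{kn-m}{n+m}<1$ is precisely $\frac{(k-1)n}{2}<m<kn$.

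As a proof, though, your write-up leaves gaps at exactly the decisive places. First, the $S_{n-1}$-symmetry only confines the Reeb vector to a $3$-dimensional subspace, and after the $c_1=0$ normalization two parameters remain; in the $C(k,m)$-coordinates the answer turns out to be $(0,\ldots,0,(n+1)\gamma,n+1)$, and the vanishing of the first $n-1$ components is not forced by symmetry, so an actual minimization (or a simultaneous determination of $b$ from the ODE boundary conditions) must still be carried out. Second, the claim that the endpoint analysis succeeds ``exactly when $\frac{(k-1)n}{2}<m<kn$'' is asserted, not derived --- and that inequality is the whole content of the theorem. Third, for generic $(k,m)$ the Reeb field is irregular (cf.\ Remark~\ref{rem:reeb}), so the slice polytope is irrational and the reduced space is only a quasifold; your description of $B$ as a weighted projective bundle over a spindle is not available in general, though this does not affect the potential-level computation. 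None of these steps would fail --- the direct route is viable and is essentially the one taken in the physics literature --- but completing them amounts to redoing the ODE integration and root analysis that the paper avoids by quoting Calabi's family and transporting it with a single linear map.
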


Let $N_{k,m}^{2n+1}$ denote the corresponding toric Sasaki-Einstein manifold.
Using a result of Lerman~\cite{Le2}, one can easily check that $N_{k,m}^{2n+1}$ 
is simply connected iff
\begin{equation} \label{cond:1conn}
{\rm gcd} (m+n, k+1) = 1 \,.
\end{equation}
When $n=2$ one can determine an explicit relation between $N^5_{k,m}$ and the 
simply connected toric Sasaki-Einstein $5$-manifolds $Y^{p,q}$, $0<q<p$, 
${\rm gcd} (q, p) = 1$, constructed by Gauntlett-Martelli-Sparks-Waldram~\cite{GW1}. 
In fact, as we will see, the associated $3$-dimensional moment cones are $SL(3,\Z)$ 
equivalent iff $k=p-1$ and $m=p+q-2$. Note that in this case 
\[
\frac{(k-1)n}{2} < m < k n \Leftrightarrow 0 < q < p
\]
and
\[
{\rm gcd} (m+n, k+1) = 1 \Leftrightarrow {\rm gcd} (q, p) = 1 \ .
\]
Since
\[
Y^{p,q} \cong S^2 \times S^3 \quad\text{for all $0<q<p$ such that  
${\rm gcd} (q, p) = 1$,}
\]
we conclude that
\[
N^5_{k,m} \cong S^2 \times S^3 \quad\text{for all $k,m\in\N$
satisfying~(\ref{cond:kse}) and~(\ref{cond:1conn}) (with $n=2$).}
\]

Gauntlett-Martelli-Sparks-Waldram construct in~\cite{GW2} higher dimensional 
generalizations of the manifolds $Y^{p,q}$. They do not describe their exact
diffeomorphism type and they do not write down the associated moment cones. 
The later should be $SL(n+1,\Z)$ equivalent to the cones $C(k,m)\subset\R^{n+1}$, 
with $k,m\in\N$ satisfying~(\ref{cond:kse}) and~(\ref{cond:1conn}), while the
former should be diffeomorphic to the corresponding 
$N_{k,m}^{2n+1}\subset M_{k,m}^{2(n+1)}$. The cones $C(k,m)\subset\R^{n+1}$
can be used to determine the diffeomorphism type of these manifolds. The 
following theorem is a particular example of that.

\begin{thm} \label{thm:main-2}
Given $n\geq 2$ and $m\in\N$, consider the (toric) complex manifold of 
real dimension $2n$ given by
\[
H^{2n}_m := \CP (\Oo(-m)\oplus\C) \to \C\CP^{n-1}\,.
\]
When $k=1$ and $0<m<n$, the toric symplectic cone $M_{1,m}^{2(n+1)}$ is
diffeomorphic to the total space of the anti-canonical line bundle of
$H^{2n}_m$ minus its zero section, while the toric contact manifold
$N_{1,m}^{2n+1}$ is diffeomorphic to the total space of the corresponding
circle bundle.
\end{thm}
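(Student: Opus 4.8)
The plan is to identify the toric symplectic cone $M_{1,m}^{2(n+1)}$ as the canonical symplectic cone associated to a Delzant-type polytope, and then to recognize that polytope as the moment polytope of (a multiple of) the anti-canonical polarization of $H^{2n}_m = \CP(\Oo(-m)\oplus\C)$. Concretely, I would first write down the moment cone $C(1,m)\subset\R^{n+1}$ from the normals given in the excerpt, specialized to $k=1$: the normals become $\nu_i=(\ve_i,1)$ for $i=1,\dots,n-1$, $\nu_n=((m+1)\ve_n-\vd,1)$, $\nu_-=(\ve_n,1)$ and $\nu_+=(-\ve_n,1)$. Since all $n+2$ normals lie on the affine hyperplane $\{x_{n+1}=1\}\subset\R^{n+1}$, the cone $C(1,m)$ is the cone over the $n$-dimensional polytope $P(1,m)$ cut out in $\{x_{n+1}=1\}\cong\R^n$ by the corresponding inward normals $\bar\nu_i=\ve_i$, $\bar\nu_n=(m+1)\ve_n-\vd$, $\bar\nu_-=\ve_n$, $\bar\nu_+=-\ve_n$. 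The first step is thus purely combinatorial: check that $P(1,m)$ is a lattice Delzant polytope and compute its normal fan.

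Second, I would show this normal fan coincides with the fan of $H^{2n}_m=\CP(\Oo(-m)\oplus\C)\to\CP^{n-1}$. The fan of $\CP^{n-1}$ has rays $\ve_1,\dots,\ve_{n-1},-\vd'$ with $\vd'=\sum_{i=1}^{n-1}\ve_i$; the projectivization of $\Oo(-m)\oplus\C$ over it is a $\CP^1$-bundle whose fan is obtained by adding two further rays $\pm\ve_n$ (for the two sections) together with a twisting of the $-\vd'$ ray by $-m\ve_n$ coming from the transition function of $\Oo(-m)$. Matching $\bar\nu_i\leftrightarrow\ve_i$, $\bar\nu_-\leftrightarrow\ve_n$, $\bar\nu_+\leftrightarrow-\ve_n$, and $\bar\nu_n=(m+1)\ve_n-\vd\leftrightarrow$ the twisted "$-\vd'-m\ve_n$"-type ray (after an $SL(n,\Z)$ change of basis absorbing the $+\ve_n$ shift) identifies the two fans. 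Hence the toric variety with moment polytope $P(1,m)$ is biholomorphic to $H^{2n}_m$, and the symplectic form is a K\"ahler form in the class determined by the polytope. The condition $0<m<n$ is exactly what makes the relevant class a positive multiple of the anti-canonical class $-K_{H_m}=\Oo(2,\ast)$ twisted appropriately, i.e. it places the polytope in the right cohomological position; I would verify this via the standard formula $-K=\sum(\text{toric divisors})$ translated into the support numbers of the facets.

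Third, invoking the general structure of toric symplectic cones from the diagonal part of the diagram in the Introduction together with the zero-first-Chern-class observation already noted for $C(k,m)$, the symplectic cone over $P(1,m)$ is canonically the total space of the anti-canonical line bundle of the corresponding toric manifold with its zero section removed — this is the standard "cone = punctured canonical bundle" picture for Gorenstein toric cones, and it applies here precisely because the normals lie on $\{x_{n+1}=1\}$. Applying this to $H^{2n}_m$ gives the first diffeomorphism claim; restricting to the unit sphere bundle (equivalently, taking the link $N_{1,m}^{2n+1}=C(1,m)\cap\{\text{level set}\}$) gives the circle-bundle claim for $N_{1,m}^{2n+1}$. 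The main obstacle I expect is the second step: correctly matching the twisted ray $\bar\nu_n=(m+1)\ve_n-\vd$ to the fan of the projective bundle, since the "$+1$" in $m+1$ and the shift by $\vd=\sum_{i=1}^n\ve_i$ (rather than $\vd'=\sum_{i=1}^{n-1}\ve_i$) must be accounted for by an explicit $GL(n,\Z)$ transformation and a choice of which section of the $\CP^1$-bundle corresponds to $\nu_-$ versus $\nu_+$; getting the twisting integer to come out as exactly $m$ (and not $m\pm 1$ or $m+n$) is the delicate bookkeeping, and it is also where the hypothesis $k=1$ is essential, since for $k>1$ the extra ray $\nu_-=k\ve_n$ is not primitive-compatible with a $\CP^1$-bundle structure and one instead gets an orbifold/quotient.
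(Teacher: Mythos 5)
Your proposal is correct and follows essentially the same route as the paper: recognize $C(1,m)$ as the standard cone over the integral Delzant polytope $P(m)$ with all support constants equal to $1$, identify the associated toric manifold as $H^{2n}_m$ in its anti-canonical class $[\omega]=2\pi c_1$, and conclude via the Boothby--Wang picture that the cone is the punctured anti-canonical bundle and $N^{2n+1}_{1,m}$ its circle bundle. The bookkeeping you flag as delicate in fact resolves immediately, since $(m+1)\ve_n-\vd = m\ve_n-\vd'$ is exactly the twisted ray of the fan of $\CP(\Oo(-m)\oplus\C)$.
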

\begin{rem} \label{rem:main-2}
Theorems~\ref{thm:main} and~\ref{thm:main-2} give rise to two natural
sub-actions of the torus action on the toric contact manifold 
$N_{1,m}^{2n+1}$:
\begin{itemize}
\item[(i)] the $\R$-action given by the flow of the Reeb vector field $K$,
determined by the contact form associated with the Sasaki-Einstein metric
given by Theorem~\ref{thm:main};
\item[(ii)] the $S^1$-action coming from the identification between
$N_{1,m}^{2n+1}$ and an $S^1$-bundle over $H^{2n}_m$.
\end{itemize}
Although in other more \emph{regular} examples, like the basic one given by
an odd-dimensional round sphere, the analogues of these two actions coincide, 
they cannot coincide in the present situation. If that were the case, we 
would have that $H^{2n}_m$ could be identified with $N_{1,m}^{2n+1}/K$ and
would then admit a K\"ahler-Einstein metric. That is well-known to be false.
In fact, the complex manifolds $H^{2n}_m$ are used by Calabi~\cite{C2} as 
examples that do not admit any K\"ahler-Einstein metric but do admit explicit 
extremal K\"ahler metrics. 

As we will see, the quotient $N_{1,m}^{2n+1}/K$ can be identified via its
moment polytope as a toric symplectic quasifold, in the sense of 
Prato~\cite{Pr}.
\end{rem}

The paper is organized as follows. In Section~\ref{s:toric} we give some
background on symplectic toric orbifolds and recall the definition and 
properties of symplectic potentials for toric compatible complex structures.
Section~\ref{s:tscones} is devoted to symplectic cones, their relation
with co-oriented contact manifolds and the classification of toric symplectic
cones via their moment polyhedral cones. The definition and basic properties
of (toric) K\"ahler-Sasaki cones is the subject of Section~\ref{s:tkscones},
which includes a brief description of their relation with (toric) Sasaki
manifolds. Cone action-angle coordinates and symplectic potentials are
introduced in Section~\ref{s:aacoord}, where we also discuss the behaviour
of symplectic potentials and toric K\"ahler-Sasaki metrics under symplectic
reduction. Section~\ref{s:newold-2} contains the proofs of 
Theorems~\ref{thm:main} and~\ref{thm:main-2}.

\subsection{Acknowledgments} I thank A.~Cannas da Silva and R.~Loja 
Fernandes, organizers of the Geometry Summer School, Instituto Superior 
T\'ecnico, Lisbon, Portugal, July 2009, where this work was presented as part 
of a mini-course. I also thank Gustavo Granja and Jos\'e Nat\'ario for useful 
conversations, and an anonymous referee for several comments and sugestions
that improved the exposition.

\section{Toric K\"ahler Orbifolds} 
\label{s:toric}

In this section, after some preliminary background on symplectic toric orbifolds, 
we recall the definition and some properties of symplectic potentials for compatible 
toric complex structures in action-angle coordinates, including a formula for the 
scalar curvature of the corresponding toric K\"ahler metric. For details 
see~\cite{Abr2,Abr3}.

\subsection{Preliminaries on Toric Symplectic Orbifolds}

\begin{defn} \label{def:torb}
A \emph{toric symplectic orbifold} is a connected $2n$-dimensional
symplectic orbifold $(B,\om)$ equipped with an effective Hamiltonian
action $\tau:\T^n \to \Diff (B,\om)$ of the standard (real) $n$-torus 
$\T^n = \R^n/2\pi\Z^n$. The corresponding \emph{moment map}, well-defined up 
to addition by a constant, will be denoted by $\mu : B \to \ft^\ast \cong \R^n$.
\end{defn}

When $B$ is a compact smooth manifold, the Atiyah-Guillemin-Sternberg 
convexity theorem states that the image $P=\mu(B)\subset \R^n$ of the 
moment map $\mu$ is the convex hull of the image of the points in $B$ 
fixed by $\T^n$, i.e. a convex polytope in $\R^n$. A theorem of 
Delzant~\cite{De} then says that the convex polytope $P\subset \R^n$ 
completely determines the toric symplectic manifold, up to equivariant 
symplectomorphisms.

In~\cite{LeTo} Lerman and Tolman generalize these two theorems to
orbifolds. While the convexity theorem generalizes word for word,
one needs more information than just the convex polytope $P$ to
generalize Delzant's classification theorem.

\begin{defn} \label{def:lapo}
A convex polytope $P$ in $\R^n$ is called \emph{simple} and
\emph{rational} if:
\begin{itemize}
\item[{\bf (1)}] there are $n$ edges meeting at each vertex $p$;
\item[{\bf (2)}] the edges meeting at the vertex $p$ are rational, 
i.e. each edge is of the form $p + tv_i,\ 0\leq t\leq \infty,\ 
{\rm where}\ v_i\in\Z^n$;
\item[{\bf (3)}] the $v_1, \ldots, v_n$ in (2) can be chosen to be a 
$\Q$-basis of the lattice $\Z^n$.
\end{itemize}
A \emph{facet} is a face of $P$ of codimension one. Following Lerman-Tolman,
we will say that a \emph{labeled polytope} is a rational simple convex
polytope $P\subset \R^n$, plus a positive integer (\emph{label}) attached
to each of its facets.

Two labeled polytopes are \emph{isomorphic} if one can be mapped to the other
by a translation, and the corresponding facets have the same integer labels.
\end{defn}

\begin{rem} 
In Delzant's classification theorem for compact symplectic
toric manifolds, there are no labels (or equivalently, all labels are
equal to $1$) and the polytopes that arise are
slightly more restrictive: the ``$\Q$'' in (3) is replaced by ``$\Z$''.
These are called \emph{Delzant polytopes}.
\end{rem}

\begin{rem} \label{rem:labels}
Each facet $F$ of a rational simple convex polytope $P\subset \R^n$
determines a unique lattice vector $\nu_F\in\Z^n \subset\R^n$: the primitive
inward pointing normal lattice vector. A convenient way of thinking about 
a positive integer label $m_F\in\N$ associated to $F$ is by dropping the primitive 
requirement from this lattice vector: consider $m_F\nu_F$ instead of $\nu_F$.

In other words, a labeled polytope can be defined as a rational simple polytope
$P\subset \R^n$ with an inward pointing normal lattice vector associated 
to each of its facets. When dealing with the effect of affine transformations
on labeled polytopes it will also be useful to allow more general inward
pointing normal vectors (see the end of this section).
\end{rem}

\begin{thm}[Lerman-Tolman] \label{thm:LeTo}
Let $(B,\om,\tau)$ be a compact toric symplectic orbifold, with
moment map $\mu : B \to \R^n$. Then $P\equiv \mu(B)$ is a
rational simple convex polytope. For every facet $F$ of $P$, there
exists a positive integer $m_F$, the label of $F$, such that the
structure group of every $p\in \mu^{-1}(\breve{F})$ is
$\Z/m_F\Z$ (here $\breve{F}$ is the relative interior of $F$).

Two compact toric symplectic orbifolds are equivariant symplectomorphic
(with respect to a fixed torus acting on both) if and only if their
associated labeled polytopes are isomorphic. Moreover, every labeled
polytope arises from some compact toric symplectic orbifold.
\end{thm}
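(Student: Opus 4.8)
The plan is to follow Delzant's proof for manifolds, adjusting each step to accommodate the orbifold structure groups, as Lerman and Tolman do. The convexity half is immediate: since the Atiyah--Guillemin--Sternberg theorem generalizes word for word to orbifolds, $P := \mu(B)$ is a convex polytope. For rationality, simplicity, and the existence of the labels, I would invoke the equivariant symplectic local normal form (the orbifold Marle--Guillemin--Sternberg theorem) at a point $p \in B$: near the $\T^n$-orbit of $p$, $(B,\om,\tau)$ is equivariantly symplectomorphic to a standard model $(\C^d \times T^*\T^{n-d})/\Gamma_p$, where $d$ is the codimension of the face $F$ of $P$ whose relative interior contains $\mu(p)$ and $\Gamma_p$ is the structure group at $p$. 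Reading the moment map off this model shows that exactly $d$ facets meet along $\breve F$ with rational inward normals, and that along a facet $F$ the structure group is a fixed cyclic group $\Z/m_F\Z$ with $m_F\nu_F$ the (possibly non-primitive) normal that the model produces; since $\breve F$ is connected, $m_F$ is well-defined. This gives the first paragraph of the statement.

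For existence, I would run the generalized Delzant construction. Write $P = \bigcap_{i=1}^N \{\,x\in\R^n : \langle x,\nu_i\rangle \ge \lambda_i\,\}$ with $\nu_i := m_i\nu_{F_i}\in\Z^n$ the labeled inward normals, let $\beta\colon\R^N\to\R^n$ send $e_i\mapsto\nu_i$, and let $K\subset\T^N$ be the kernel of the induced homomorphism $\T^N\to\T^n$ --- a compact abelian group which, because the $\nu_i$ form only a $\Q$-basis at each vertex, may be disconnected (this is exactly where ``$\Q$-basis'' rather than ``$\Z$-basis'' enters). Restricting the standard $\T^N$-action to $K$ on $\C^N$ and reducing at the level determined by the $\lambda_i$, the level set is a smooth submanifold with locally free $K$-action, so $B_P := \C^N /\!/ K$ is a compact toric symplectic orbifold; a direct computation identifies its moment image with $P$ and its structure groups along the facets with the prescribed labels, which is the last sentence of the statement.

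For uniqueness, I would show that any compact toric symplectic orbifold with labeled polytope $P$ is equivariantly symplectomorphic to the model $B_P$. First one builds a $\T^n$-equivariant diffeomorphism $B\to B_P$ over the identity of $P$ by induction on the dimension of faces, starting from the fixed points and extending facet by facet using the uniqueness of the local models from the first part; then one upgrades it to a symplectomorphism by an equivariant Moser argument, using that the two invariant symplectic forms induce the same moment map, are cohomologous, and differ by the exterior derivative of a $\T^n$-invariant one-form vanishing in the torus directions. The hard part will be this gluing in the uniqueness step: one must track the structure groups carefully enough that the local identifications with $B_P$ are unique up to a transformation that can be absorbed when passing between orbifold charts, and keep the Moser isotopy equivariant and compatible with the atlas. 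The genuinely new feature compared with Delzant's theorem is that the labels $m_F$ cannot be recovered from $P$ alone, which is precisely why the classification must be phrased in terms of labeled polytopes.
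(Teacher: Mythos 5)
The paper does not prove this theorem -- it is quoted from Lerman--Tolman \cite{LeTo}, and the only proof content the paper retains is the remark that the existence part comes from a symplectic reduction of $\C^d$ ($d=$ number of facets), which induces the canonical K\"ahler structure. Your sketch correctly reconstructs the strategy of the original Lerman--Tolman argument (orbifold convexity, local normal forms yielding the labels $m_F$, the generalized Delzant construction with a possibly disconnected kernel $K\subset\T^N$ acting only locally freely, and a gluing-plus-Moser uniqueness step), and in particular is consistent with the reduction construction the paper alludes to.
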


Recall that a \emph{K\"ahler orbifold} can be defined as a symplectic
orbifold $(B,\omega)$ equipped with a \emph{compatible} complex 
structure $J\in\Ii(B,\omega)$, i.e. a complex structure on $B$ such 
that the bilinear form
\[
g_J (\cdot, \cdot) := \omega (\cdot, J\cdot)
\]
defines a \emph{Riemannian metric}. The proof of Theorem~\ref{thm:LeTo}, 
in both manifold and orbifold cases, gives an explicit construction of a 
canonical model for each toric symplectic orbifold, i.e. it associates to 
each labeled polytope $P$ an explicit toric symplectic orbifold 
$(B_P,\om_P,\tau_P)$ with moment map $\mu_P:B_P\to P$. Moreover, this
explicit construction consists of a certain symplectic reduction of 
the standard $\C^d$, for $d=$ number of facets of $P$, to which one can
apply the K\"ahler reduction theorem of Guillemin and Sternberg~\cite{GS}.
Hence, the standard complex structure on $\C^d$ induces a canonical 
$\T^n$-invariant complex structure $J_P$ on $B_P$, compatible with $\om_P$. 
In other words, each toric symplectic orbifold is K\"ahler and to each 
labeled polytope $P\subset \R^n$ one can associate a canonical toric 
K\"ahler orbifold $(B_P,\om_P,J_P,\tau_P)$ with moment map $\mu_P:B_P\to P$.

\subsection{Symplectic Potentials for Toric Compatible Complex Structures}

Toric compatible complex structures, and corresponding K\"ahler metrics, 
can be described using the following symplectic set-up.

Let $\breve{P}$ denote the interior of $P$, and
consider $\breve{B}_P\subset B_P$ defined by $\breve{B}_P = \mu_P^{-1}
(\breve{P})$. One can easily check that $\breve{B}_P$ is a smooth open 
dense subset of $B_P$, consisting of all the points where the 
$\T^n$-action is free. It can be described as
\[
\breve{B}_P\cong \breve{P}\times \T^n =
\left\{ (x,y): x\in\breve{P}\subset\R^n\,,\ 
y\in\R^n/2\pi\Z^n\right\}\,,
\]
where $(x,y)$ are symplectic or \emph{action-angle} coordinates for
$\om_P$, i.e.
\[
\om_P = dx\wedge dy = \sum_{j=1}^n dx_j \wedge dy_j\ .
\]

If $J$ is any $\om_P$-compatible toric complex structure on $B_P$,
the symplectic $(x,y)$-coordinates on $\breve{B}_P$ can be chosen
so that the matrix that represents $J$ in these coordinates has the form
\[
\begin{bmatrix}
\phantom{-}0\ \  & \vdots & -S^{-1} \\
\hdotsfor{3} \\
\phantom{-}S\ \  & \vdots & 0\,
\end{bmatrix}
\]
where $S=S(x)=\left[s_{jk}(x)\right]_{j,k=1}^{n,n}$ is a symmetric and
positive-definite real matrix. A simple computation shows that the vanishing 
of the Nijenhuis tensor, i.e. the integrability condition for the complex 
structure $J$, is equivalent to $S$ being the Hessian of a smooth function 
$s\in C^\infty (\breve{P})$, i.e.
\[
S = \Hess_x (s)\,,\ s_{jk}(x) = \frac{\p^2 s}{\p x_j \p x_k} (x)\,,\ 
1\leq j,k \leq n\,.
\]
Holomorphic coordinates for $J$ are given in this case by
\[
z(x,y) = u(x,y) + i v(x,y) = \frac{\p s}{\p x}(x)
+ iy\ .
\]
We will call $s$ the \emph{symplectic potential} of the 
compatible toric complex structure $J$. Note that the K\"ahler metric
$g_J (\cdot,\cdot) = \om_P(\cdot,J\cdot)$ is given in these
$(x,y)$-coordinates by the matrix
\begin{equation} \label{metricG}
\begin{bmatrix}
\phantom{-}S & \vdots & 0\  \\
\hdotsfor{3} \\
\phantom{-}0 & \vdots & S^{-1}
\end{bmatrix} \,.
\end{equation}

\begin{rem} \label{rmk:don1}
A beautiful proof of this local normal form for toric compatible complex
structures is given by Donaldson in~\cite{D4} (see also~\cite{Abr4}).
It illustrates a small part of his formal general framework for the action 
of the symplectomorphism group of a symplectic manifold on its space of 
compatible complex structures (cf.~\cite{D1}).
\end{rem}

We will now characterize the symplectic potentials that correspond 
to toric compatible complex structures on a toric symplectic orbifold 
$(B_P,\om_P,\tau_P)$. Every convex rational simple polytope 
$P\subset \R^n$ can be described by a set of inequalities of the form
\[
\langle x, \nu_r\rangle + \rho_r \geq 0\,,\ r=1,\ldots,d,
\]
where $d$ is the number of facets of $P$, each $\nu_r$ is a 
primitive element of the lattice $\Z^n\subset\R^n$ (the inward-pointing
normal to the $r$-th facet of P), and each $\rho_r$ is a real number.
Following Remark~\ref{rem:labels}, the labels $m_r\in\N$ attached to the 
facets can be incorporated in the description of $P$ by considering the 
affine functions $\ell_r : \R^n \to \R$ defined by
\[
\ell_r (x) = \langle x, m_r\nu_r \rangle + \la_r\,\ 
\mbox{where}\ \la_r = m_r\rho_r\ \mbox{and}\ 
r=1,\ldots,d\,.
\]
Then $x$ belongs to the $r$-th facet of $P$ iff $\ell_r (x) = 0$, 
and $x\in\breve{P}$ iff $\ell_r(x) > 0$ for all $r=1,\ldots,d$.

The following  two theorems are proved in~\cite{Abr3}. The first is a straightforward generalization to toric orbifolds of a result of Guillemin~\cite{Gui1}.
\begin{theorem} \label{thm1}
Let $(B_P,\om_P,\tau_P)$ be the symplectic toric orbifold associated
to a labeled polytope $P\subset \R^n$. Then, in suitable
action-angle $(x,y)$-coordinates on $\breve{B}_P\cong\breve{P}
\times \T^n$, the symplectic potential $s_P\in C^\infty (\breve{P})$ of the canonical compatible toric complex structure $J_P$ is given by
\[
s_P (x) = \frac{1}{2} \sum_{r=1}^{d} \ell_r(x) \log \ell_r (x)\ .
\]
\end{theorem}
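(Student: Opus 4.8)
The plan is to verify directly that the explicit function $s_P$ in the statement is the symplectic potential of the canonical toric complex structure $J_P$ on $B_P$, by checking it against the Guillemin-type local model for the K\"ahler reduction from $\C^d$. Recall that $B_P$ is obtained as a symplectic (and K\"ahler) reduction of the standard $(\C^d,\om_{\mathrm{std}},i)$ by a subtorus $K\subset\T^d$ determined by the normals $\nu_1,\dots,\nu_d$; concretely, $0\to\fk\to\R^d\llra{\beta}\R^n\to 0$ where $\beta(\varepsilon_r)=m_r\nu_r$, and the $\la_r$ encode the level of the reduction. On $\C^d$ one has the standard action-angle coordinates in which the symplectic potential is $\tfrac12\sum_{r=1}^d a_r\log a_r$ (up to an affine term), where $a_r=|w_r|^2$ are the $\T^d$-moment coordinates. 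So the first step is to make this ambient picture precise and fix the conventions.

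Next I would invoke the K\"ahler reduction theorem of Guillemin--Sternberg~\cite{GS} in action-angle form: on the open dense free part, reduction corresponds to pulling back the ambient symplectic potential along the section of $\beta^*:\R^d\to\fk^{\perp}\cong\R^n$ of the moment coordinates, i.e.\ one restricts $\tfrac12\sum_r a_r\log a_r$ to the affine slice $\{a\in\R^d : \langle\text{(linear conditions from }\fk)\rangle,\ \text{level }=\la\}$ and re-expresses the result in terms of the base moment coordinates $x\in\breve P$. The key computational point is the identification $a_r = \ell_r(x)$: the moment map for the residual $\T^n=\T^d/K$ action, written in the ambient coordinates, sends a point with $|w_r|^2=a_r$ to $x$ with $\langle x,m_r\nu_r\rangle+\la_r = a_r$, which is exactly $\ell_r(x)=a_r$. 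Substituting gives $s_P(x)=\tfrac12\sum_{r=1}^d \ell_r(x)\log\ell_r(x)$, possibly modulo an affine function of $x$; such an affine term can be absorbed by a shift of the $y$-coordinates (equivalently, it does not change $\Hess_x s_P$, hence not the metric or the complex structure), so it may be discarded. One should also check that $s_P$ so defined is genuinely smooth and strictly convex on $\breve P$: smoothness on the open polytope is clear since each $\ell_r>0$ there, and strict positive-definiteness of $\Hess_x s_P$ follows because it is the pullback under the surjection $\beta$ of the positive-definite Hessian $\tfrac12\,\mathrm{diag}(1/a_r)$ of the ambient potential, restricted to a complementary slice on which $\beta$ is injective.

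Finally, one must argue that this $s_P$ does reproduce the \emph{canonical} $J_P$ and not merely some compatible toric complex structure: this is immediate from the construction, since $J_P$ was defined precisely as the structure induced by K\"ahler reduction of the standard complex structure on $\C^d$, and the above substitution is exactly the action-angle description of that induced structure. The orbifold (labeled) case is handled uniformly by carrying the labels $m_r$ inside $\beta$ and inside the $\ell_r$ from the outset, so no separate argument is needed — the only difference from Guillemin's manifold case~\cite{Gui1} is that $\beta$ need only be surjective over $\Q$, which does not affect any step above.

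The main obstacle is purely bookkeeping: getting the normalizations in the reduction consistent so that the level constants land as $\la_r$ (not $2\la_r$, etc.) and so that the ambient action-angle potential is $\tfrac12\sum a_r\log a_r$ with the matching factor of $\tfrac12$, and then confirming that the leftover term after substitution is genuinely affine in $x$ rather than a more general harmless discrepancy. Once conventions are pinned down, the identity is a one-line substitution; all of this is carried out in detail in~\cite{Abr3}.
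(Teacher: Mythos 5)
Your proposal is correct and follows essentially the route the paper itself indicates: the paper defers the proof to~\cite{Abr3} as a generalization of Guillemin's result~\cite{Gui1}, and explicitly notes that the restriction property of symplectic potentials under toric symplectic reduction (Proposition~\ref{prop:spotred}) can be used to prove Theorem~\ref{thm1}, which is exactly your strategy of restricting the ambient potential $\tfrac12\sum_r a_r\log a_r$ on $\C^d$ to the reduction slice and identifying $a_r=\ell_r(x)$. The one point worth flagging is that the ``pull back the ambient potential along the slice'' step is not a formal consequence of Guillemin--Sternberg but is precisely the content of the Calderbank--David--Gauduchon result, which you correctly isolate as the key input.
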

The second theorem provides the symplectic version of the 
$\p\op$-lemma in this toric orbifold context.
\begin{theorem} \label{thm2}
Let $J$ be any compatible toric complex structure on the symplectic
toric orbifold $(B_P,\om_P,\tau_P)$. Then, in suitable
action-angle $(x,y)$-coordinates on $\breve{B}_P\cong\breve{P}
\times \T^n$, $J$ is given by a symplectic potential
$s\in C^\infty(\breve{P})$ of the form
\[
s(x) = s_P (x) + h(x)\,,
\]
where $s_P$ is given by Theorem~\ref{thm1}, $h$ is smooth on the whole
$P$, and the matrix $S=\Hess(s)$ is positive definite on $\breve{P}$
and has determinant of the form
\[
\Det(S) = \left(\de \prod_{r=1}^d \ell_r \right)^{-1}\,,
\]
with $\de$ being a smooth and strictly positive function on the whole $P$.

Conversely, any such potential $s$ determines a 
complex structure on $\breve{B}_P\cong\breve{P}\times \T^n$, that
extends uniquely to a well-defined compatible toric complex structure $J$
on the toric symplectic orbifold $(B_P,\om_P,\tau_P)$.
\end{theorem}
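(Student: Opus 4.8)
The plan is to derive both implications from an explicit description of the local models of $(B_P,\om_P,J_P)$ near the boundary strata of $P$, with Guillemin's formula (Theorem~\ref{thm1}) serving as the reference point.

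\emph{Forward implication.} I would fix action-angle coordinates as in the local normal form preceding Theorem~\ref{thm1}, so that the given compatible toric complex structure $J$ restricts on $\breve B_P$ to the one determined by a symplectic potential $s\in C^\infty(\breve P)$ with $S=\Hess(s)$ symmetric positive-definite; this settles positivity on $\breve P$. The canonical structure $J_P$ of Theorem~\ref{thm1} is also of this type, with potential $s_P$, and since a $GL(n,\Z)\ltimes\R^n$ change of action-angle coordinates changes the data $(\nu_r,\la_r)$ only by an affine transformation but preserves the form of every assertion in the theorem, I may compute both potentials in the same coordinates and set $h:=s-s_P\in C^\infty(\breve P)$. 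It then remains to prove that (a) $h$ extends smoothly to all of $P$ and (b) $\Det(S)=(\de\prod_r\ell_r)^{-1}$ with $\de$ smooth and strictly positive on $P$; both concern the behaviour of $s$ near $\p P$, so it suffices to work near the relative interior of each face, and by intersecting facet neighbourhoods it is enough to treat a single facet.

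\emph{The local model at a facet, and (a).} Near $\mu_P^{-1}(\breve F)$, for $F$ a facet with label $m$, a $GL(n,\Z)$ change of basis lets me assume $\nu_F=e_1$ and, after a translation, $\ell_F(x)=m x_1$, with the remaining facets staying away. The Lerman--Tolman construction identifies a neighbourhood of $\mu_P^{-1}(\breve F)$ in $(B_P,\om_P,J_P)$ with a neighbourhood of a smooth divisor $\{w_1=0\}$ in a product orbifold $\big(\C\times D^{n-1}\times\T^{n-1}\big)/(\Z/m\Z)$, carrying $J_P$-holomorphic coordinates $(w_1,z')$ in which, by Guillemin's formula, $w_1$ is — up to a smooth nonvanishing factor — the exponential of a constant multiple of $\p s_P/\p x_1+iy_1$, since $\p s_P/\p x_1=\tfrac m2\log(m x_1)+(\text{smooth})$; thus $x_1$ is comparable to a positive power of $|w_1|$ and $y_1$ determines $\arg w_1$. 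Because $J$ is by hypothesis an honest compatible toric complex structure on $B_P$, near this stratum it also has $\T^{n-1}$-invariant $J$-holomorphic coordinates $(\tilde w_1,\tilde z')$ cutting out the same divisor; comparing these with $(w_1,z')$ gives a $\T^{n-1}$-equivariant biholomorphism defined away from $\{w_1=0\}$, which extends smoothly across it precisely because of the equivariance. Rewriting this transition in terms of action-angle coordinates and symplectic potentials is what then shows that $h=s-s_P$ extends smoothly across $F$; iterating over the facets through each face (using simplicity to treat their defining functions as part of a coordinate system) gives (a) on all of $P$.

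\emph{The determinant condition, and the converse.} For $s_P$ itself I would compute from Theorem~\ref{thm1} that $\Hess(s_P)=\tfrac12\sum_r\ell_r^{-1}(m_r\nu_r)(m_r\nu_r)^{\top}$, apply the Cauchy--Binet formula, and obtain $\Det(\Hess s_P)=(\de_P\prod_r\ell_r)^{-1}$ where $\de_P^{-1}=2^{-n}\sum_{|I|=n}\big(\det[m_r\nu_r]_{r\in I}\big)^2\prod_{r\notin I}\ell_r$ is a polynomial, hence smooth on $P$, and strictly positive there because at each point of $P$ at least one summand is positive — take $I$ to be the $n$ facets through a vertex of the smallest face containing the point — while the rest are nonnegative. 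For a general $J$ I would argue geometrically: near $\mu_P^{-1}(\breve F)$ the circle dual to $\nu_F$ collapses, so for the metric $g_J=\mathrm{diag}(S,S^{-1})$ of \eqref{metricG} to extend to the nondegenerate orbifold metric that it is, $S$ must have exactly one eigenvalue growing like $\ell_F^{-1}$ and a regular complementary block — the plane model being $g=\mathrm{diag}\big((2\mu)^{-1},2\mu\big)$ near the origin — so that $\ell_F\Det(S)$ extends smoothly and strictly positively across $F$; running over all faces gives (b). The converse runs the same local comparison in reverse: given $s=s_P+h$ with $h\in C^\infty(P)$, $S=\Hess(s)>0$ on $\breve P$ and $\Det(S)=(\de\prod_r\ell_r)^{-1}$ with $\de\in C^\infty(P)$ strictly positive, the normal form defines a complex structure on $\breve B_P\cong\breve P\times\T^n$ with $J$-holomorphic coordinates $z=\p s/\p x+iy$, and near $\mu_P^{-1}(\breve F)$ smoothness of $h$ makes the transition to the $J_P$-holomorphic coordinates $(w_1,z')$ smooth up to $\{w_1=0\}$ while the determinant condition makes its Jacobian invertible there, so $J$ extends as a compatible complex structure across $\mu_P^{-1}(\breve F)$; iterating over the faces extends $J$ to all of $B_P$, and uniqueness is clear since $\breve B_P$ is dense.

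\emph{Main obstacle.} The hard part is this boundary analysis — in particular, checking that the determinant condition is exactly, neither weaker nor stronger than, what makes the transition between the $J$- and $J_P$-holomorphic charts a regular biholomorphism across the collapsing-orbit strata, equivalently what makes $g_J$ a nondegenerate orbifold metric there. This is where Guillemin's explicit formula for $s_P$ and a careful choice of local holomorphic coordinates do the work; the remainder is bookkeeping over the face poset of $P$.
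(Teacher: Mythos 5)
First, a point of reference: the paper does not actually prove this theorem --- it states that Theorems~\ref{thm1} and~\ref{thm2} ``are proved in~\cite{Abr3}''. The proof there rests on two inputs your sketch never isolates: (i) rigidity of toric complex structures --- any compatible toric $J$ on $B_P$ is $\T^n$-equivariantly biholomorphic to the canonical $J_P$, because the underlying complex orbifold is the toric variety of the normal fan of $P$ --- and (ii) the invariant $\p\op$-lemma together with the Legendre transform exchanging K\"ahler potentials in complex coordinates with symplectic potentials in action coordinates. Your facet-by-facet chart comparison is a legitimate alternative skeleton (it is close in spirit to how the boundary conditions are verified in the literature), and your Cauchy--Binet computation of $\Det(\Hess s_P)$, including the positivity of $\de_P$ via a vertex of the minimal face containing the point, is correct and complete.

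The gaps are in the steps that carry the actual content. The map you call ``a $\T^{n-1}$-equivariant biholomorphism defined away from $\{w_1=0\}$'' is, as you set it up, only the comparison of charts for two \emph{different} complex structures on the same smooth orbifold, hence a priori a diffeomorphism and not holomorphic for either structure; to make it a biholomorphism you must first invoke (i) above, which you do not. Even granting that, ``extends smoothly across the divisor precisely because of the equivariance'' is false as a general principle ($w\mapsto 1/w$ on $\C^\ast$ is $S^1$-equivariant and does not extend); one needs a boundedness or properness input --- exactly what the Legendre-transform bookkeeping, which you defer, is there to supply --- before a removable-singularity argument applies. Likewise, the claim that $S$ ``must have exactly one eigenvalue growing like $\ell_F^{-1}$'' near a facet is a heuristic rather than an argument, and the facet-local statements that $\ell_F\Det(S)$ extends smoothly and positively across $\breve F$ do not by themselves assemble into a single $\de$ smooth and strictly positive on all of $P$, corners included. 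The converse has the same problem: the smoothness of the transition up to $\{w_1=0\}$ and the invertibility of its Jacobian there are precisely the assertions to be proved, not consequences one can read off from the hypotheses. You correctly flag this boundary analysis as the main obstacle; as written, it is still missing, and it is where essentially all of the proof lives.
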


\subsection{Scalar Curvature}

We now recall from~\cite{Abr1} a particular formula for the scalar
curvature in action-angle $(x,y)$-coordinates. A K\"ahler metric of the
form~(\ref{metricG}) has scalar curvature $Sc$ given by
\[
Sc = - \sum_{j,k} \frac{\p}{\p x_j}
\left( s^{jk}\, \frac{\p \log \Det(S)}{\p x_k} \right)\,,
\]
which after some algebraic manipulations becomes the more compact
\begin{equation} \label{scalarsymp2}
Sc = - \sum_{j,k} \frac{\p^2 s^{jk}}{\p x_j \p x_k}\,, 
\end{equation}
where the $s^{jk},\ 1\leq j,k\leq n$, are the entries of the inverse 
of the matrix $S = \Hess_x (s)$, $s\equiv$ symplectic potential
(Donaldson gives in~\cite{D4} an appropriate interpretation of this 
formula, by viewing the scalar curvature as the moment map for the
action of the symplectomorphism group on the space of compatible 
complex structures).

\subsection{Symplectic Potentials and Affine Transformations}

The labeled polytope $P\subset\R^n$ of a symplectic toric
orbifold is only well defined up to translations, since the
moment map is only well defined up to addition of constants. 
Moreover, the twisting of the action by an automorphism of
the torus $\T^n = \R^n / 2\pi\Z^n$ corresponds to an $SL(n,\Z)$ 
transformation of the polytope. Since these operations have no
effect on a toric K\"ahler metric, symplectic potentials should
have a natural transformation property under these affine maps.
While the effect of translations is trivial to analyse, the effect 
of $SL(n,\Z)$ transformations is more interesting. In fact:
\[
\text{symplectic potentials transform quite naturally under any
$GL(n,\R)$ linear transformation.}
\]
Let $T\in GL(n,\R)$ and consider the linear symplectic change
of action-angle coordinates
\[
x := T^{-1}x'\quad\text{and}\quad y:=T^t y'\,.
\]
Then 
\[
P' = \bigcap_{a=1}^d \{x'\in\R^n\,:\ 
\ell'_a (x') := \langle x', \nu'_a \rangle + \lambda'_a \geq 0\} 
\]
becomes
\[
P:= T^{-1}(P') = \bigcap_{a=1}^d \{x\in\R^n\,:\ \ell_a (x) := 
\langle x, \nu_a \rangle + \lambda_a \geq 0\}
\]
with 
\[
\nu_a = T^t \nu'_a\quad\text{and}\quad \lambda_a = \lambda'_a\,,
\]
and symplectic potentials transform by
\[
s = s'\circ T \quad\text{(in particular, $s_{P} = s_{P'} \circ T)$.}
\]
The corresponding Hessians are related by
\[
S = T^t (S'\circ T) T
\]
and
\[
Sc = Sc' \circ T\,.
\]

For the purposes of this paper, the point of this discussion is the following.
Let $P\subset\R^n$ be a labeled polytope and $P' = T(P) \subset\R^n$ for some
arbitrary $T\in GL(n,\R)$. Supose that
\[
s' : \breve{P}'\to\R
\]
is of the form specified in Theorem~\ref{thm2} (with $s_{P'} = s_{P}\circ T^{-1}$).
Then
\[
s := s' \circ T : \breve{P} \to \R
\]
also has the form specified in Theorem~\ref{thm2} and, consequently, is the
symplectic potential of a well defined toric compatible complex structure on
the toric symplectic orbifold $(B_P, \omega_P)$. Moreover, since $Sc = Sc' \circ T$,
we have that
\[
Sc' = \text{constant} \Leftrightarrow Sc = \text{constant}\,.
\]

\begin{example} \label{ex:hirzebruch}
Figure~\ref{fig:hirzebruch} illustrates two equivalent descriptions 
of a toric symplectic rational ruled $4$-manifold or, equivalently,
of a Hirzebruch surface 
\[
H^2_m := \CP (\Oo(-m)\oplus\C) \to \C\CP^{1}\,,\  m\in\N\,. 
\]
The linear map $T\in GL(2,\R)$ relating the two is given by
\[
T = 
\begin{bmatrix}
m & -1 \\
0 & \ 1
\end{bmatrix}    
\]
The inward pointing normal that should be considered for each facet is
specified. The right polytope is a standard Delzant polytope for
the Hirzebruch surface $H^2_m$. The left polytope is very useful for
the constructions of section~\ref{s:newold-2} and
was implicitly used by Calabi in~\cite{C2}.
\end{example}

\begin{figure}
      \centering
      \includegraphics[viewport = 50 0 250 100, scale=1.0]{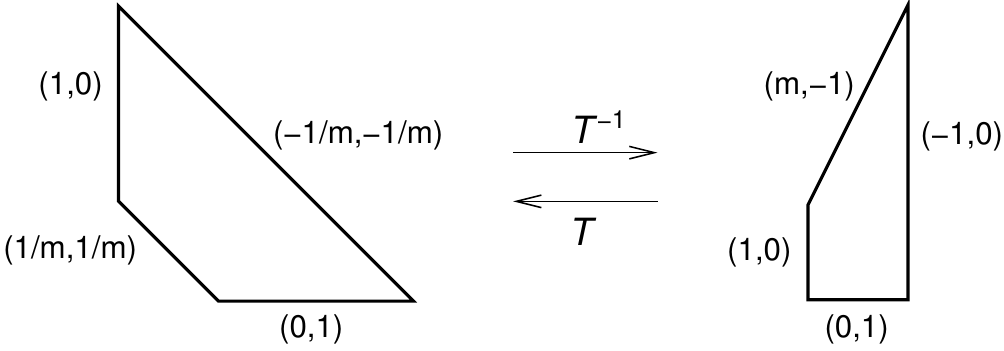}
      \caption{Hirzebruch surfaces.}
      \label{fig:hirzebruch}
\end{figure}

\section{Toric Symplectic Cones} 
\label{s:tscones}

In this section, after defining symplectic cones and briefly reviewing
their direct relation with co-oriented contact manifolds, we consider 
toric symplectic cones and their classification via good moment cones. 

\begin{defn} \label{def:scone}
A \emph{symplectic cone} is a triple $(M, \omega, X)$, where 
$(M,	\omega)$ is a connected symplectic manifold, i.e. 
$\omega\in\Omega^2(M)$ is a closed and non-degenerate $2$-form, 
and $X\in\Xx (M)$ is a vector field generating a proper 
$\R$-action $\rho_t:M\to M$, $t\in\R$, such that
$\rho_t^\ast (\omega) = e^{2t} \omega$. Note that the 
\emph{Liouville vector field} $X$ satisfies
$\Ll_X \omega = 2\omega$, or equivalently
\[
\omega = \frac{1}{2} d (\iota(X) \omega)\,.
\]
A \emph{compact} symplectic cone is a symplectic cone
$(M, \omega, X)$ for which the quotient $M/\R$ is compact.
\end{defn}

\begin{defn} \label{def:ccontact}
A \emph{co-orientable contact manifold} is a pair $(N, \xi)$, where
$N$ is a connected odd dimensional manifold and $\xi \subset TN$ 
is a maximally non-integrable hyperplane distribution globally defined 
by some \emph{contact form} $\alpha\in\Omega^1(N)$, i.e.
\[
\xi = \ker \alpha \quad\text{and}\quad
d\alpha|_\xi\ \text{is non-degenerate.}
\]
A \emph{co-oriented} contact manifold is a triple $(N, \xi, [\alpha])$,
where $(N,\xi)$ is a co-orientable contact manifold and $[\alpha]$ is the
conformal class of some contact form $\alpha$, i.e.
\[
[\alpha] = \left\{e^h \alpha \ |\ h\in C^\infty(N) \right\}\,.
\]
\end{defn}

Given a co-oriented contact manifold $(N, \xi, [\alpha])$, with contact
form $\alpha$, let
\[
M:=N\times\R\,,\ \omega := d(e^t \alpha) \quad\text{and}\quad
X:= 2\frac{\partial}{\partial t}\,,
\]
where $t$ is the $\R$ coordinate. Then $(M,\omega,X)$ is a 
symplectic cone, usually called the \emph{symplectization} of
$(N, \xi, [\alpha])$.

Conversely, given a symplectic cone $(M, \omega, X)$ let 
\[
N := M/\R\,,\ \xi := \pi_\ast (\ker (\iota(X) \omega))
\quad\text{and}\quad 
\alpha := s^\ast (\iota(X) \omega)\,,
\]
where $\pi:M\to N$ is the natural principal $\R$-bundle quotient 
projection and $s:N\to M$ is any global section (note that such global 
sections always exist, since any principal $\R$-bundle is trivial). 
Then $(N,\xi, [\alpha])$ is a  co-oriented contact manifold whose 
symplectization is the symplectic cone $(M, \omega, X)$.

In fact, we have that
\begin{center}
co-oriented contact manifolds $\overset{1:1}{\longleftrightarrow}$ 
symplectic cones
\end{center}
(see Chapter 2 of~\cite{Le1} for details).
Under this bijection, compact toric contact manifolds, Sasaki manifolds 
and Sasaki-Einstein metrics correspond respectively to the toric 
symplectic cones, K\"ahler-Sasaki cones and Ricci-flat K\"ahler-Sasaki 
metrics that are the subject of this paper.

\begin{example} \label{ex:R}
The most basic example of a symplectic cone is $\R^{2(n+1)}\setminus\{0\}$ with 
linear coordinates 
\[
(u_1, \ldots, u_{n+1}, v_1, \ldots, v_{n+1})\,,
\]
symplectic form  
\[
\om_{\rm st} = du \wedge dv := \sum_{j=1}^{n+1} du_j \wedge dv_j
\]
and Liouville vector field
\[
X_{\rm st} = u\frac{\p}{\p u} +  v\frac{\p}{\p v}
:= \sum_{j=1}^{n+1} \left( u_j\frac{\p}{\p u_j} +  v_j\frac{\p}{\p v_j}\right)\,.
\]
The associated co-oriented contact manifold is isomorphic to $(S^{2n+1}, \xi_{\rm st})$,
where $S^{2n+1} \subset \C^{n+1}$ is the unit sphere and $\xi_{\rm st}$ is the hyperplane 
distribution of complex tangencies, i.e.
\[
\xi_{\rm st} = T S^{2n+1} \cap i \, T S^{2n+1}\,.
\]
\end{example}

\begin{example} \label{ex:cotbundle}
Let $Q$ be a manifold and denote by $M$ the cotangent bundle of $Q$ with
the zero section deleted: $M:=T^\ast Q \setminus 0$. We have that $M$ is a 
symplectic cone since the proper $\R$-action $\rho_t:M\to M$, given by 
$\rho_t (q,p) = (q, e^{2t} p)$, expands the canonical symplectic
form exponentially. The associated co-oriented contact manifold is the
co-sphere bundle $S^\ast Q$.
\end{example}

\begin{example} \label{ex:boothby-wang}
Let $(B,\omega)$ be a symplectic manifold such that the cohomology class 
\[
\frac{1}{2\pi}[\omega] \in H^2(B,\R) \ 
\text{is integral, i.e. in the image of the natural map $H^2(B,\Z) \to H^2(B, \R)$.}
\] 
Suppose that $H^2(B,\Z)$ has no torsion, so that the above natural map is injective and
we can consider $H^2(B,\Z) \subset H^2 (B,\R)$. Denote by $\pi:N\to B$ the principle 
circle bundle with first Chern class 
\[
c_1 (N) = \frac{1}{2\pi}[\omega]\,.
\] 
A theorem of Boothby and Wang~\cite{BW} asserts that there is a connection $1$-form 
$\alpha$ on $N$ with $d\alpha = \pi^\ast\omega$ and, consequently, 
$\alpha$ is a contact form. We will call $(N, \xi:=\ker(\alpha))$ the \emph{Boothby-Wang} 
manifold of $(B, \omega)$. The associated symplectic cone is the total space of the 
corresponding line bundle $L\to B$ with the zero section deleted.

When $B=\cp^n$, with its standard Fubini-Study symplectic form, we recover 
Example~\ref{ex:R}, i.e. $(N,\xi) \cong (S^{2n+1}, \xi_{\rm st})$.
\end{example}

\begin{defn} \label{def:tscone}
A \emph{toric symplectic cone} is a symplectic cone $(M,\omega,X)$ of 
dimension $2(n+1)$ equipped with an effective $X$-preserving symplectic
$\T^{n+1}$-action, with moment map $\mu:M\to \ft^\ast \cong \R^{n+1}$ 
such that $\mu(\rho_t(m)) = e^{2t} \rho_t (m)\,,\ \forall\, m\in M,\, 
t\in\R$. Its \emph{moment cone} is defined to be the set
\[
C := \mu(M) \cup \{0\} \subset \R^{n+1}\,.
\]
\end{defn}

\begin{remark}
On a symplectic cone $(M,\omega,X)$, any $X$-preserving symplectic group 
action is Hamiltonian.
\end{remark}

\begin{example} \label{ex:toric-R}
Consider the usual identification $\R^{2(n+1)}\cong\C^{n+1}$ given by
\[
z_j = u_j + i v_j\,,\ j=1,\ldots, n+1\,,
\]
and the standard $\T^{n+1}$-action defined by
\[
(y_1, \ldots, y_{n+1}) \cdot (z_1,\ldots, z_{n+1}) = 
(e^{-i y_1}z_1, \ldots, e^{-i y_{n+1}}z_{n+1})\,.
\]
The symplectic cone $(\R^{2(n+1)}\setminus\{0\}, \om_{\rm st}, X_{\rm st})$
of Example~\ref{ex:R} equipped with this $\T^{n+1}$-action is a toric symplectic 
cone. The moment map $\mu_{\rm st}:\R^{2(n+1)}\setminus\{0\} \to \R^{n+1}$ is 
given by
\[
\mu_{\rm st} (u_1, \ldots, u_{n+1}, v_1, \ldots, v_{n+1}) = 
\frac{1}{2} (u_1^2 + v_1^2, \ldots, u_{n+1}^2 + v_{n+1}^2)\,.
\]
and the moment cone is 
$
C = (\R_0^+)^{n+1} \subset \R^{n+1}\,.
$
\end{example}

In~\cite{Le} Lerman completed the classification of compact toric 
symplectic cones, initiated by Banyaga and Molino~\cite{BnM1,BnM2,Bn} 
and continued by Boyer and Galicki~\cite{BG}. The ones that are relevant 
for toric K\"ahler-Sasaki geometry are characterized by having good 
moment cones.

\begin{defn}[Lerman] \label{def:gcone}
A cone $C\subset\R^{n+1}$ is \emph{good} if there exists a minimal set 
of primitive vectors $\nu_1, \ldots, \nu_d \in \Z^{n+1}$, with 
$d\geq n+1$, such that
\begin{itemize}
\item[(i)] $C = \bigcap_{a=1}^d \{x\in\R^{n+1}\,:\ 
\ell_a (x) := \langle x, \nu_a \rangle \geq 0\}$.
\item[(ii)] any codimension-$k$ face $F$ of $C$, $1\leq k\leq n$, 
is the intersection of exactly $k$ facets whose 
set of normals can be completed to an integral base of 
$\Z^{n+1}$.
\end{itemize}
\end{defn}

\begin{theorem}[Banyaga-Molino, Boyer-Galicki, Lerman] \label{thm:gcone}
For each good cone $C\subset\R^{n+1}$ there exists a unique compact toric 
symplectic cone $(M_C, \om_C, X_C, \mu_C)$ with moment cone $C$.
\end{theorem}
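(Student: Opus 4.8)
The plan is to follow Lerman's strategy from~\cite{Le}, which proceeds by an explicit symplectic reduction from a standard linear model, closely paralleling the Delzant construction recalled in Section~\ref{s:toric} but adapted to the conical setting. First I would associate to the good cone $C\subset\R^{n+1}$, with primitive inward normals $\nu_1,\ldots,\nu_d\in\Z^{n+1}$, the surjective map of tori $\beta:\T^d\to\T^{n+1}$ dual to the linear map $\Z^{n+1}\to\Z^d$ sending the standard basis to $(\nu_1,\ldots,\nu_d)$, and let $K=\ker\beta$, a subgroup of $\T^d$ of dimension $d-(n+1)$. The goodness hypothesis (ii) guarantees in particular that $K$ is connected, i.e. a genuine subtorus; this is the point where condition (ii) is used to ensure smoothness of the quotient rather than merely an orbifold. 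One then restricts the standard $\T^d$-action on $\C^d\setminus\{0\}$ (as in Example~\ref{ex:toric-R}, but with the origin deleted so that the Liouville field $X_{\rm st}$ generates a proper $\R$-action) to $K$, with moment map $\mu_K:\C^d\setminus\{0\}\to\fk^\ast$ obtained by composing $\mu_{\rm st}$ with the projection $(\R^d)^\ast\to\fk^\ast$.

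The second step is to form $M_C:=\mu_K^{-1}(0)/K$. Here the key verifications are: (a) $0$ is a regular value of $\mu_K$ and $K$ acts freely on $\mu_K^{-1}(0)$, so that $M_C$ is a smooth manifold of dimension $2d-2(d-n-1)=2(n+1)$ carrying a reduced symplectic form $\omega_C$; (b) the Liouville vector field $X_{\rm st}$ is tangent to $\mu_K^{-1}(0)$ (because $\mu_K$ is homogeneous of degree $2$, so $\mu_K^{-1}(0)$ is $\R$-invariant) and $K$-invariant, hence descends to a vector field $X_C$ on $M_C$ with $\mathcal{L}_{X_C}\omega_C=2\omega_C$ and generating a proper $\R$-action, so $(M_C,\omega_C,X_C)$ is a symplectic cone; (c) the residual $\T^d/K\cong\T^{n+1}$-action on $M_C$ is effective, $X_C$-preserving, and its moment map $\mu_C$ has image (union $\{0\}$) exactly $C$ — this last identification is the combinatorial heart and uses that the facet normals of $C$ are the $\nu_a$, together with the description of $\mu_{\rm st}$ in Example~\ref{ex:toric-R}. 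For (a), the freeness of the $K$-action on $\mu_K^{-1}(0)$ again uses condition (ii): at a point whose stabilizer in $\T^d$ is the coordinate subtorus indexed by a face's $k$ facets, goodness says the corresponding normals extend to a lattice basis, which forces that coordinate subtorus to meet $K$ trivially.

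For uniqueness, suppose $(M,\omega,X,\mu)$ is any compact toric symplectic cone with $\mu(M)\cup\{0\}=C$. The plan is to show it is equivariantly symplectomorphic (intertwining the Liouville fields) to the model $M_C$. One stratifies $M$ by orbit type according to the faces of $C$; over the interior $\breve C$ the action is free and $\mu^{-1}(\breve C)\cong\breve C\times\T^{n+1}$ with $\omega=dx\wedge dy$, which agrees with the model there. The task is then to show that the clutching data over the lower strata — i.e. how $\breve C\times\T^{n+1}$ is completed along each facet — is rigid, determined by the primitive normal $\nu_a$ of that facet. This is an equivariant Darboux/Moser-type argument performed facet by facet (and then along higher-codimension faces), exactly as in the Delzant uniqueness proof but keeping track of $\R$-equivariance, which is automatic since the whole construction is homogeneous; alternatively, one can cone off: a toric symplectic cone over $C$ restricts on a level set $\{\langle\mu,\nu\rangle=1\}$ for suitable $\nu$ to a compact toric symplectic orbifold with labeled polytope $P=C\cap\{\langle\cdot,\nu\rangle=1\}$ and invoke Theorem~\ref{thm:LeTo}, then reconstruct $M$ as the symplectization.

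I expect the main obstacle to be step (c): verifying that the reduced moment cone is \emph{exactly} $C$, with the right face structure, and — interlocking with (a) — showing that goodness condition (ii) is precisely what is needed for the $K$-action on $\mu_K^{-1}(0)$ to be free (giving a manifold, not an orbifold) and for the facet normals of the quotient cone to be the prescribed primitive vectors $\nu_a$. The bookkeeping translating the combinatorics of faces of $C$ into stabilizer computations in $\T^d$, and checking that the minimality of the set $\{\nu_a\}$ matches the facets of the image, is the delicate part; everything else (regular value, properness of the $\R$-action, the Liouville scaling) is formal once the setup is in place.
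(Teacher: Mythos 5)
The paper offers no proof of this statement --- it is imported from Banyaga--Molino, Boyer--Galicki and Lerman, with Remark~\ref{rmk:model} merely recording that existence follows from an explicit symplectic reduction starting from a symplectic vector space, as in~\cite{Le}. Your outline reconstructs precisely that argument (reduction of $\C^d\setminus\{0\}$ by the kernel of $\beta:\T^d\to\T^{n+1}$ induced by $e_a\mapsto\nu_a$ --- note the underlying lattice map should go from $\Z^d$ to $\Z^{n+1}$, not the reverse as you wrote --- with goodness condition (ii) supplying freeness of the kernel's action on the zero level set, and a Delzant/Lerman-style rigidity argument for uniqueness), so it takes essentially the same approach as the source the paper cites.
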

\begin{remark} \label{rmk:model}
The compact toric symplectic cones characterized by this theorem will
be called \emph{good} toric symplectic cones. Like for compact toric symplectic manifolds, the existence part of the theorem follows from an
explicit symplectic reduction construction starting from a symplectic 
vector space (see~\cite{Le}).
\end{remark}

\begin{example} \label{ex:stdcone}

Let $P\subset\R^n$ be an \emph{integral Delzant polytope}, 
i.e. a Delzant polytope with integral vertices or, equivalently,
the moment polytope of a compact toric symplectic manifold 
$(B_P, \om_P, \mu_P)$ such that $\frac{1}{2\pi}[\omega]\in H^2(B_P, \Z)$.
Then, its \emph{standard cone}
\begin{equation} \label{eq:stdcone}
C:= \left\{z(x,1)\in\R^{n}\times\R\,:\ x\in P\,,\ z\geq 0\right\}
\subset\R^{n+1}
\end{equation}
is a good cone. Moreover
\begin{itemize}
\item[(i)] the toric symplectic manifold $(B_P, \om_P, \mu_P)$ is the 
$S^1\cong \{{\bf 1}\}\times S^1 \subset \T^{n+1}$ \emph{symplectic reduction} 
of the toric symplectic cone $(M_{C}, \omega_{C}, X_{C}, \mu_{C})$ 
(at level one).
\item[(ii)] $(N_C:=\mu_C^{-1}(\R^{n}\times\{1\}), 
\alpha_C := (\iota(X_C) \omega_C)|_{N_C})$ is the \emph{Boothby-Wang} 
manifold of $(B_P, \om_P)$. The restricted $\T^{n+1}$-action makes it a 
\emph{toric contact manifold}.
\item[(iii)] $(M_C, \omega_C, X_C)$ is the \emph{symplectization} of 
$(N_C, \alpha_C)$.
\end{itemize}
See Lemma 3.7 in~\cite{Le3} for a proof of these facts.

If $P\subset\R^n$ is the standard simplex, i.e. $B_P = \cp^n$, 
then its standard cone $C\subset\R^{n+1}$ is the moment cone of
$(M_C = \C^{n+1}\setminus\{0\}, \om_{\rm st}, X_{\rm st})$
equipped with the $\T^{n+1}$-action given by
\begin{align}
& (y_1,\ldots,y_n,y_{n+1})\cdot (z_1, \ldots, z_n, z_{n+1}) \notag \\
=\ & (e^{-i(y_1+y_{n+1})}z_1, \ldots, e^{-i(y_n+y_{n+1})}z_n,
e^{-iy_{n+1}}z_{n+1})\notag
\end{align}
The moment map $\mu_C : \C^{n+1}\setminus\{0\} \to \R^{n+1}$ is
given by
\[
\mu_{C} (z) = 
\frac{1}{2} (|z_1|^2, \ldots, |z_n|^2, |z_1|^2+\cdots+|z_n|^2
+ |z_{n+1}|^2)
\]
and 
\[
N_C := \mu_C^{-1}(\R^{n}\times\{1\}) = \left\{ z\in\C^{n+1}\,:\ 
\|z\|^2 = 2 \right\} \cong S^{2n+1}\,.
\]
\end{example}

\begin{remark} \label{rem:orbBW}
Up to a possible twist of the action by an automorphism of the torus
$\T^{n+1}$, any good toric symplectic cone can be obtained via an 
orbifold version of the Boothby-Wang construction of 
Example~\ref{ex:boothby-wang}, where the base is a toric symplectic
orbifold. In fact, up to an $SL(n+1,\Z)$ transformation, any good moment 
cone can be written as the standard cone, given by~(\ref{eq:stdcone}), 
of a labeled polytope.
\end{remark}

\section{Toric K\"ahler-Sasaki Cones} 
\label{s:tkscones}

In this section we define (toric) K\"ahler-Sasaki cones, present their
basic properties and briefly describe their relation with (toric) 
Sasaki manifolds.

\begin{defn} \label{def:kscone}    
A \emph{K\"ahler-Sasaki cone} is a symplectic 
cone $(M,\omega,X)$ equipped with a compatible complex 
structure $J\in\Ii(M,\omega)$ such that the \emph{Reeb vector 
field} $K:=JX$ is K\"ahler, i.e.
\[
\Ll_K \omega = 0\quad\text{and}\quad\Ll_K J = 0\,.
\]
Note that $K$ is then also a \emph{Killing} vector field for the
Riemannian metric $g_J$.

Any such $J$ will be  called a \emph{Sasaki} complex structure
on the symplectic cone $(M,\omega,X)$ and the associated metric
$g_J$ will be called a \emph{K\"ahler-Sasaki} metric. 
The space of all Sasaki complex structures will be denoted by 
$\Ii_S (M,\omega, X)$.
\end{defn}

Given a K\"ahler-Sasaki cone $(M,\omega,X,J)$, define a smooth 
positive function $r:M\to\R^+$ by
\[
r:= \|X\| = \|JX\| = \|K\|\,,
\]
where $\|\cdot\|$ denotes the norm associated with the metric 
$g_J$. One easily checks that
\begin{itemize}
\item[(i)] $K$ is the \emph{Hamiltonian} vector field of $-r^2/2$;
\item[(ii)] $X$ is the \emph{gradient} vector field of $r^2/2$.
\end{itemize}
Define $\alpha \in \Omega^1 (M)$ by
\[
\alpha:= \iota(X)\omega / r^2\,.
\]
We then have that
\[
\omega = d (r^2 \alpha)/2\ ,\quad
\alpha (K) \equiv 1 \quad\text{and}\quad
\Ll_X \alpha = 0\ .
\]
If we now define
\[
N := \left\{ r = 1 \right\} \subset M
\quad\text{and}\quad 
\xi:=\ker{\alpha|_N}\,,
\] 
we have that
\[
(N, \xi, \alpha|_N, g_J|_N)\text{ is a \emph{Sasaki manifold}
(see~\cite{BG1} for the definition of a Sasaki manifold).}
\]
In fact, one can easily check from the definitions that
\begin{center}
Sasaki manifolds $\overset{1:1}{\longleftrightarrow}$ 
K\"ahler-Sasaki cones.
\end{center}

Given a K\"ahler-Sasaki cone $(M, \om, X, J)$, let
\[
B:=M//K = N/K
\] 
be the symplectic reduction of $(M, \om)$ by the action 
of $K=JX$ and denote by $\pi : N \to B$ the quotient projection. 
When $B$ is smooth, we have that $\pi^\ast (TB) \cong \xi$ and
$J|_\xi$ induces an almost complex structure on $B$ which, by the
already mentioned K\"ahler reduction theorem of Guillemin and 
Sternberg~\cite{GS}, is integrable. Hence,
\[
(B, d\alpha|_\xi, J|_\xi)\text{ is a K\"ahler manifold.}
\]
The smoothness of $B$ is related with the regularity of the K\"ahler-Sasaki cone.
\begin{defn} \label{def:regularKScone}
A K\"ahler-Sasaki cone $(M, \om, X, J)$, with Reeb vector field 
$K=JX$, is said to be:
\begin{itemize}
\item[(i)] \emph{regular} if $K$ generates a \emph{free} $S^1$-action.
\item[(ii)] \emph{quasi-regular} if $K$ generates a \emph{locally free} 
$S^1$-action.
\item[(iii)] \emph{irregular} if $K$ generates an \emph{effective} 
$\R$-action.
\end{itemize}
\end{defn}

\noindent Hence, $B$ is
\begin{itemize}
\item[(i)] a \emph{smooth} K\"ahler manifold if the K\"ahler-Sasaki cone is \emph{regular}.
\item[(ii)] a K\"ahler \emph{orbifold} if the K\"ahler-Sasaki cone is \emph{quasi-regular}.
\item[(iii)] only a K\"ahler \emph{quasifold}, in the sense of Prato~\cite{Pr},
if the K\"ahler-Sasaki cone is \emph{irregular}.
\end{itemize}

\begin{remark}
Note that the Sasaki manifold determined, as above, by a K\"ahler-Sasaki cone is 
always smooth.
\end{remark}

\begin{defn} \label{def:tKScone}    
A \emph{toric K\"ahler-Sasaki cone} is a good toric symplectic cone 
$(M,\omega,X, \mu)$ equipped with a \emph{toric} Sasaki complex 
structure $J\in\Ii_S^{\T}(M,\omega)$, i.e. a Sasaki complex 
structure invariant under the torus action. The associated metric $g_J$
will be called a toric K\"ahler-Sasaki metric.
\end{defn}
\begin{remark}
\ 
\begin{itemize}
\item[(i)] It follows from Theorem~\ref{thm:gcone} and Remark~\ref{rmk:model} that 
any good toric symplectic cone has toric Sasaki complex structures. These will be
described in the next section.
\item[(ii)] On a toric K\"ahler-Sasaki cone $(M,\omega,X, \mu, J)$, the 
K\"ahler action generated by the Reeb vector field $K=JX$ corresponds to the
action generated by a fixed vector in the Lie algebra of the torus
(see Lemma~\ref{lem:reebconst} below).
\item[(iii)] The K\"ahler reduction $B:=M//K$ of a toric K\"ahler-Sasaki cone is a 
toric K\"ahler space: manifold (regular case), orbifold 
(quasi-regular case) or quasifold (irregular case).
\end{itemize}

\end{remark}

\begin{example} \label{ex:toric-KS-R}
The toric symplectic cone 
$(\R^{2(n+1)}\setminus\{0\}, \om_{\rm st}, X_{\rm st}, \mu_{\rm st})$
of Example~\ref{ex:toric-R}, equipped with the standard linear complex
structure $J_0 : \R^{2(n+1)} 	\to \R^{2(n+1)}$ given by
\[
J_0 = 
\begin{bmatrix}
\phantom{-}0\ \  & \vdots & -I \\
\hdotsfor{3} \\
\phantom{-}I\ \  & \vdots & 0\,
\end{bmatrix}
\]
is a toric K\"ahler-Sasaki cone.
\end{example}

\section{Cone Action-Angle Coordinates and Symplectic Potentials} 
\label{s:aacoord}

As described in section~\ref{s:toric}, the space $\Ii^\T$ of toric compatible 
complex structures on a compact toric symplectic orbifold can be 
effectively parametrized, using global action-angle coordinates, by
symplectic potentials, i.e. certain smooth real valued functions on the 
corresponding labeled polytope. In this section we present the analogue of 
this fact for the space $\Ii_S^\T$ of toric Sasaki complex structures on a 
good toric symplectic cone, due to Burns-Guillemin-Lerman~\cite{BGL2} and Martelli-Sparks-Yau~\cite{MSY}. We will also discuss how symplectic potentials
and toric K\"ahler-Sasaki metrics behave under symplectic reduction.

Let $C \subset \R^{n+1}$ be a good cone and  $(M, \om, X, \mu)$ the 
corresponding good toric symplectic cone (we omit the subscript $C$ to 
simplify the notation). Let $\breve{C}$ 
denote the interior of $C$, and consider $\breve{M}\subset M$ defined by 
$\breve{M} = \mu^{-1}(\breve{C})$. One can easily check that $\breve{M}$ 
is a smooth open dense subset of $M$, consisting of all the points where 
the $\T^n$-action is free. One can use the explicit model for 
$(M, \om, X, \mu)$, given by the symplectic reduction construction mentioned
in Remark~\ref{rmk:model}, to show that $\breve{M}$ can be described as
\[
\breve{M}\cong \breve{C}\times \T^n =
\left\{ (x,y):\, x\in\breve{C},\, y\in\T^{n+1} \equiv 
\R^{n+1}/2\pi\Z^{n+1}\right\}\,,
\]
where in these $(x,y)$ coordinates we have
\[
\om|_{\breve{M}} = dx\wedge dy\  \quad
\mu(x,y) = x \quad\text{and}\quad
X|_{\breve{M}} = 2x \frac{\partial}{\partial x} = 2\sum_{i=1}^{n+1} x_i 
\frac{\partial}{\partial x_i}\,.
\]

\begin{defn} \label{def:aacoord}
Any such set of coordinates will be called \emph{cone action-angle} 
coordinates.
\end{defn}

If $J$ is any $\om$-compatible toric complex structure on $M$ such that
$\Ll_X J = 0$, i.e. for which the Liouville vector field $X$ is 
holomorphic, the cone action-angle $(x,y)$-coordinates on $\breve{M}$ can 
be chosen so that the matrix that represents $J$ in these coordinates has 
the form
\begin{equation}\label{matrixJ}
\begin{bmatrix}
\phantom{-}0\ \  & \vdots & -S^{-1} \\
\hdotsfor{3} \\
\phantom{-}S\ \  & \vdots & 0\,
\end{bmatrix}
\end{equation}
where $S=S(x)=\left[s_{ij}(x)\right]_{i,j=1}^{n+1,n+1}$ is a symmetric 
and positive-definite real matrix. The integrability condition for the
complex structure $J$ is again equivalent to $S$ being the Hessian of a 
smooth real function $s\in C^\infty (\breve{C})$, i.e.
\begin{equation} \label{Hessg}
S = \Hess_x (s)\,,\ s_{ij}(x) = \frac{\p^2 s}{\p x_i \p x_j} (x)\,,\ 
1\leq i,j \leq n+1\,,
\end{equation}
and holomorphic coordinates for $J$ are again given by
\[
z(x,y) = u(x,y) + i v(x,y) = \frac{{\partial}s}{{\partial}x}(x)
+ i y\,.
\]
The condition $\Ll_X J = 0$ is equivalent to 
\begin{equation} \label{hom-1}
S(e^{2t} x) = e^{-2t} S(x)\,,\ \forall \,t\in\R\,,\ x\in\breve{C}\,,
\end{equation}
i.e. to $S$ being homogeneous of degree $-1$ in $x$.

\begin{rem} \label{rmk:don2}
A proof of these facts can be given by combining Donaldson's method
of proof in the polytope case (cf. Remark~\ref{rmk:don1}) with the
Sasaki condition on the complex structure $J$.
\end{rem}

The Reeb vector field $K:=JX$ of such a toric complex structure 
(cf. Definition~\ref{def:kscone}) is given 
by
\[
K = \sum_{i=1}^{n+1} b_i \frac{\partial}{\partial y_i}
\quad\text{with}\quad
b_i = 2 \sum_{j=1}^{n+1} s_{ij} x_j\,.
\]
\begin{lemma}[Martelli-Sparks-Yau] \label{lem:reebconst}
If $S(x) = \left[s_{ij}(x)\right]$ is homogeneous of 
degree $-1$, then the corresponding Reeb vector field
$K= ({\bf 0}, K_s)$, with $K_s:=(b_1, \ldots, b_{n+1})$, is a 
\emph{constant} vector. In other words, the action 
generated by $K$ corresponds to the action generated by a fixed
vector in the Lie algebra of the torus. In particular, $K$ is K\"ahler 
and
\[
\text{regularity of the toric K\"ahler-Sasaki cone} 
\Leftrightarrow
\text{rationality of $K_s\in\R^{n+1}$.}
\]
\end{lemma}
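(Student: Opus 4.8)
The plan is to compute $K=JX$ directly in cone action-angle coordinates and then exploit the homogeneity hypothesis on $S$. Starting from the matrix form~(\ref{matrixJ}) of $J$ and the expression $X|_{\breve M} = 2x\,\partial/\partial x$, one reads off that $J$ sends the vector $(2x,0)$ (in $(\partial/\partial x,\partial/\partial y)$-components) to $(0,2Sx)$, so indeed $K = JX = \sum_i b_i\,\partial/\partial y_i$ with $b_i = 2\sum_j s_{ij}(x)\,x_j$, as stated just above the lemma. The content of the lemma is that this vector $K_s=(b_1,\dots,b_{n+1})$, a priori a function of $x\in\breve C$, is in fact constant.

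First I would differentiate $b_i(x)$ with respect to $x_k$. Using $s_{ij}=\partial^2 s/\partial x_i\partial x_j$ we get $\partial b_i/\partial x_k = 2s_{ik} + 2\sum_j (\partial s_{ij}/\partial x_k)\,x_j$. Now the homogeneity condition~(\ref{hom-1}), namely $S(e^{2t}x)=e^{-2t}S(x)$, says $S$ is homogeneous of degree $-1$; applying Euler's relation to each entry $s_{ij}$ gives $\sum_k x_k\,\partial s_{ij}/\partial x_k = -s_{ij}$. I would instead apply Euler's relation in the $k$-th differentiated form: differentiating the identity $\sum_l x_l\,\partial s_{ij}/\partial x_l = -s_{ij}$ with respect to... actually the cleaner route is to contract $\partial b_i/\partial x_k$ suitably. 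The key observation: since $s_{ij}$ is homogeneous of degree $-1$, the third derivatives $\partial s_{ij}/\partial x_k$ are homogeneous of degree $-2$, and Euler gives $\sum_j x_j\,(\partial s_{ij}/\partial x_k) = -2\,(\text{something})$ — more precisely, from $\sum_j x_j s_{ij}$ being the sum defining $b_i/2$ one can also note $\sum_j x_j\,\partial s_{ij}/\partial x_k = \partial(\sum_j x_j s_{ij})/\partial x_k - s_{ik}$. Substituting back, $\partial b_i/\partial x_k = 2s_{ik} + 2\big(\tfrac12\,\partial b_i/\partial x_k - s_{ik}\big) = \partial b_i/\partial x_k$, which is a tautology, so I would instead use the homogeneity of $b_i$ itself: $b_i(x)=2\sum_j s_{ij}(x)x_j$ is homogeneous of degree $-1+1=0$, so $\sum_k x_k\,\partial b_i/\partial x_k = 0$. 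Combined with the closedness-type symmetry $\partial s_{ij}/\partial x_k = \partial s_{ik}/\partial x_j$ (automatic since $s_{ij}$ are second partials of $s$), one gets $\partial b_i/\partial x_k = 2s_{ik} + 2\sum_j x_j\,\partial s_{ik}/\partial x_j = 2s_{ik} + 2\cdot(-2)\,?$ — here I apply Euler to $s_{ik}$ (degree $-1$): $\sum_j x_j\,\partial s_{ik}/\partial x_j = -s_{ik}$, hence $\partial b_i/\partial x_k = 2s_{ik} - 2s_{ik} = 0$. Thus $b_i$ is constant on $\breve C$, proving the first assertion.

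Having established $K_s$ constant, the remaining claims are short. The vector $K=({\bf 0},K_s)$ with constant $K_s$ is precisely the generator of the sub-torus action determined by the element $K_s\in\R^{n+1}=\operatorname{Lie}(\T^{n+1})$; since torus-generated vector fields are automatically Killing for any invariant metric and preserve $J$ (as $J$ is $\T$-invariant), $K$ is K\"ahler in the sense of Definition~\ref{def:kscone}. Finally, $K$ generates a closed (compact) one-parameter subgroup of $\T^{n+1}$ — equivalently a locally free $S^1$-action, the quasi-regular or regular case — exactly when $K_s$ lies on a rational line in $\R^{n+1}$; if $K_s$ is irrational the closure of its flow is a higher-dimensional subtorus and the action is only an effective $\R$-action, i.e. the irregular case. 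This gives the stated equivalence between regularity of the toric K\"ahler-Sasaki cone and rationality of $K_s$.

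The main obstacle is purely bookkeeping: keeping the Euler-relation computation honest, since a careless contraction collapses to a tautology (as the false start above shows). The correct pivot is to use homogeneity of the right object — apply Euler of degree $-1$ to the entries $s_{ik}$ while differentiating $b_i$, rather than to $b_i$ itself — together with the symmetry $\partial s_{ij}/\partial x_k = \partial s_{ik}/\partial x_j$ coming from $s_{ij}$ being second partials of $s$. Once the differentiation is set up with the right index pattern, the cancellation $2s_{ik} - 2s_{ik} = 0$ is immediate, and everything else (Killing, the rationality dichotomy) is standard torus geometry.
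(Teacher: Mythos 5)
Your proof is correct: the contraction $\partial b_i/\partial x_k = 2s_{ik} + 2\sum_j x_j\,\partial s_{ik}/\partial x_j = 2s_{ik}-2s_{ik}=0$, obtained from the symmetry of the third partials of $s$ together with Euler's relation for the degree $-1$ homogeneous entries $s_{ik}$, is exactly the expected argument, and the paper itself states the lemma without proof, citing Martelli--Sparks--Yau, so there is nothing to compare it against. Two small expository points: strip out the recorded false starts before submitting, and note that rationality of $K_s$ strictly speaking yields a possibly only \emph{locally free} $S^1$-action (the regular \emph{or} quasi-regular case), which is evidently the dichotomy the statement intends to draw against the irregular case.
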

The norm of the Reeb vector field is given by
\[
\|K\|^2 = \|({\bf 0}, K_s)\|^2 = b_i s^{ij} b_j =
b_i s^{ij} (2 s_{jk} x_k) = 2 b_i x_i = 
2 \langle x, K_s \rangle\,.
\]
Hence
\[
\|K\|>0 \Leftrightarrow \langle x, K_s \rangle > 0
\quad\text{and}\quad
\|K\|=1 \Leftrightarrow \langle x, K_s \rangle = 1/2\,.
\]
\begin{defn}[Martelli-Sparks-Yau] \label{def:char}
The \emph{characteristic hyperplane} $H_K$ and \emph{polytope} $P_K$ 
of a toric K\"ahler-Sasaki cone $(M,\omega,X, \mu, J)$, with moment 
cone $C\subset\R^{n+1}$, are defined as
\[
H_K := \{x\in\R^{n+1}\,:\ \langle x, K_s \rangle = 1/2\}
\quad\text{and}\quad
P_K := H_K \cap C\,.
\]
\end{defn}
\begin{rem} \label{rem:char}
Note that $N:=\mu^{-1}(H_K)$ is a toric Sasaki manifold 
and $P_K$ is the moment polytope of $B=M//K$. Moreover, we see that
$K$ gives rise to compatible splitting identifications 
$M = N\times \R$ and $C = P_K \times \R$
\end{rem}

As we have just seen, any toric Sasaki complex structure 
$J\in\Ii^{\T}_S (\breve{M}, \omega, X)$ can be written in 
suitable cone action-angle coordinates $(x,y)$ on $\breve{M} \cong \breve{C}\times\T^{n+1}$ in the form~(\ref{matrixJ}), with $S$ 
satisfying~(\ref{Hessg}) and~(\ref{hom-1}).
\begin{defn} \label{def:spotential}
The corresponding smooth real function $s\in C^\infty (\breve{C})$
will be called the \emph{symplectic potential} of the toric Sasaki 
complex structure
\end{defn}

\begin{example} \label{ex:sympot-R}
Consider the toric K\"ahler-Sasaki cone of Example~\ref{ex:toric-KS-R}.
In cone action-angle coordinates $(x,y)$ on
\[
\breve{C}\times\T^{n+1} = (\R^+)^{n+1} \times \T^{n+1}\,,
\]
the symplectic potential
\[
s: \breve{C} = (\R^+)^{n+1} \to \R
\]
of the toric Sasaki complex structure $J_0$ is given by
\[
s(x) = \frac{1}{2} \sum_{a=1}^{n+1} x_a \log x_a\,.
\]
\end{example}

We will now characterize the space of smooth real functions 
$s\in C^\infty (\breve{C})$ that are the symplectic potential
of some toric Sasaki complex structure 
$J\in\Ii^{\T}_S (M, \omega, X)$.

The K\"ahler reduction theorem of Guillemin and Sternberg can
also be applied to the symplectic reduction construction mentioned in 
Remark~\ref{rmk:model}. Hence, given a good cone $C\subset\R^{n+1}$, 
defined by
\[
C = \bigcap_{a=1}^d \{x\in\R^{n+1}\,:\ \ell_a (x) := \langle x, \nu_a 
\rangle \geq 0\}
\]
as in Definition~\ref{def:gcone}, the explicit model for the 
corresponding good toric symplectic cone $(M, \om, X, \mu)$ has a 
canonical toric Sasaki complex structure $J_C\in\Ii^{\T}_S (M, \omega, X)$. 
Its symplectic potential is given by the following particular case of a 
theorem proved by Burns-Guillemin-Lerman in~\cite{BGL2}.
\begin{thm}
\label{thm:cspot}
In appropriate action-angle coordinates $(x,y)$, the \emph{canonical symplectic potential} $s_C:\breve{C}\to\R$ for $J_C |_{\breve{C}}$ 
is given by
\[
s_C (x) = \frac{1}{2} \sum_{a=1}^d \ell_a (x) \log \ell_a (x)\,.
\]
\end{thm}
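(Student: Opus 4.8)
The plan is to follow the strategy of Guillemin's proof of Theorem~\ref{thm1} in the polytope case, adapted to the cone situation by means of the explicit symplectic reduction model for $(M, \om, X, \mu)$ described in Remark~\ref{rmk:model}. Writing $C = \bigcap_{a=1}^d\{\ell_a \ge 0\}$ with $\ell_a(x) = \langle x,\nu_a\rangle$, the primitive normals $\nu_1,\ldots,\nu_d \in \Z^{n+1}$ determine a surjection $\R^d \to \R^{n+1}$, $e_a \mapsto \nu_a$, hence a short exact sequence $0 \to \fk \to \R^d \to \R^{n+1} \to 0$ and a subtorus $\Gamma \subset \T^d$ with Lie algebra $\fk$. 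The dual sequence gives the injection $\R^{n+1}\hookrightarrow\R^d$, $x \mapsto (\ell_1(x),\ldots,\ell_d(x))$. Lerman's construction realizes $(M,\om,X,\mu)$ as the symplectic reduction of an appropriate open subset of $(\C^d, \om_{\mathrm{st}}, X_{\mathrm{st}})$ by $\Gamma$ at level zero (level zero because the apex of $C$ is the origin), and $J_C$ is the complex structure induced from the standard one on $\C^d$ by the K\"ahler reduction theorem of Guillemin and Sternberg~\cite{GS}; since $X_{\mathrm{st}}$ is holomorphic on $\C^d$ and descends to $X$, its reduction is automatically holomorphic, so $J_C \in \Ii_S^\T(M,\om,X)$.

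The next step is to track action-angle coordinates through the reduction. On $\C^d$, in action-angle coordinates $(w,\phi) \in (\R^+)^d \times \T^d$, the standard complex structure has symplectic potential $\tfrac12\sum_{a=1}^d w_a\log w_a$ (Example~\ref{ex:sympot-R}), with logarithmic holomorphic coordinates $z_a = \tfrac12(\log w_a + 1) + i\phi_a$. The level-zero set of the $\fk$-moment map inside the open stratum is cut out precisely by $w \perp \fk$, i.e. by $w_a = \ell_a(x)$ for $x \in \breve{C}$; the residual $\T^{n+1} = \T^d/\Gamma$-action then provides cone action-angle coordinates $(x,y)$ on $\breve{M}$ with $\mu(x,y)=x$, in which $y$ pulls back from $\phi$ via the dual inclusion. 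Under this identification the claimed potential is exactly the restriction $s_C(x) = \tfrac12\sum_{a=1}^d \ell_a(x)\log\ell_a(x)$ of the $\C^d$ potential.

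To confirm that $s_C$ is indeed the symplectic potential of $J_C$, I would verify that $S_C := \Hess_x(s_C) = \tfrac12\sum_{a=1}^d \nu_a\nu_a^t/\ell_a(x)$ is symmetric, positive definite on $\breve{C}$ (the $\nu_a$ span $\R^{n+1}$), and homogeneous of degree $-1$ in $x$ (since each $\ell_a$ is homogeneous of degree $1$), so that by the local normal form recorded in the preceding pages (cf. Remark~\ref{rmk:don2}) $s_C$ determines an $\om$-compatible toric complex structure on $\breve{M} \cong \breve{C}\times\T^{n+1}$ with holomorphic Liouville field. One then checks directly that the functions $\zeta_i := \sum_a (\nu_a)_i z_a = \sum_a (\nu_a)_i\,\tfrac12(\log\ell_a(x)+1) + i\sum_a(\nu_a)_i\phi_a = \tfrac{\p s_C}{\p x_i} + i y_i$ are $\Gamma$-invariant (because $\sum_a(\nu_a)_i\kappa_a = 0$ for $\kappa\in\fk$) holomorphic functions on the reduction, hence are precisely the logarithmic holomorphic coordinates of $J_C|_{\breve M}$; comparing with the formula $z(x,y)=\p s/\p x + iy$ identifies $s_C$ as the symplectic potential.

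The main obstacle is the same as in Guillemin's polytope argument: showing that the complex structure defined by $s_C$ on the dense open set $\breve{M}$ extends smoothly across $M \setminus \breve{M} = \mu^{-1}(\partial C)$ and coincides there with the globally defined $J_C$ coming from the reduction. This is precisely where the good-cone hypothesis of Definition~\ref{def:gcone}(ii) enters — playing the role that the Delzant and Lerman--Tolman conditions play in Theorem~\ref{thm1} — ensuring that near each face $F$ of $C$ the reduction model is, up to an $SL(n+1,\Z)$ change of coordinates, a product of a standard $\C^k$ (with $k$ the codimension of $F$) and a factor on which the torus acts freely, so that the logarithmic singularities of $s_C$ along the facets through $F$ are exactly those of the standard $\tfrac12\sum w_a\log w_a$ and are absorbed upon passing to the $z_a$. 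Working face by face in this way — equivalently, observing that $w_a = \ell_a(x)$ is nothing but the pushforward of the $\C^d$ moment map, so the reduction carries the standard (smooth) complex structure — reduces the statement to the flat case and to Guillemin--Sternberg reduction, which is the content of Burns--Guillemin--Lerman~\cite{BGL2}.
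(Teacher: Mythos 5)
Your argument is correct and is essentially the proof the paper has in mind: the theorem is stated there as a special case of Burns--Guillemin--Lerman~\cite{BGL2}, and the paper explicitly notes that it follows from Lerman's reduction model of Remark~\ref{rmk:model} together with the restriction property of symplectic potentials under toric symplectic reduction (Proposition~\ref{prop:spotred}), which is exactly the reduction-of-$\C^d$ strategy you carry out. Your verification of the Hessian, the homogeneity of degree $-1$, and the identification of the $\Gamma$-invariant holomorphic coordinates all check out.
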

One checks easily that $\Hess_x (s_C)$ is homogeneous of degree 
$-1$. The corresponding Reeb vector field $K = ({\bf 0}, K_C)$ is given 
by
\begin{equation} \label{eq:K_C}
K_C = \sum_{a=1}^d \nu_a\,.
\end{equation}

\begin{example} \label{ex:sympot-R-2}
The symplectic potential presented in Example~\ref{ex:sympot-R} is the 
canonical symplectic potential of the corresponding good cone
$C = (\R^+_0)^{n+1} \subset \R^{n+1}$ and
\[
K_C = (1, \ldots, 1) \in \R^{n+1}\,.
\]
\end{example}

\begin{example} \label{ex:sympot-stdcone}

The standard cone over the standard simplex, considered in 
Example~\ref{ex:stdcone}, is given by
\[
C = \bigcap_{a=1}^{n+1} \{x\in\R^{n+1}\,:\ 
\ell_a (x) := \langle x, \nu_a \rangle \geq 0\}\,,
\]
where
\[
\nu_a = e_a\,,\ a=1,\ldots,n,\quad\text{and}\quad
\nu_{n+1} = (-1, \ldots , -1,1)\,.
\]

Hence, defining 
\[
r=\sum_{a=1}^n x_a\,,
\] 
we have that
\[
s_C(x) = \frac{1}{2} \left( \sum_{a=1}^n x_a \log x_a +
(x_{n+1}-r) \log (x_{n+1}-r) \right)
\]
and
\[
K_C = \sum_{a=1}^{n+1} \nu_a = (0, \ldots, 0, 1) \in \R^{n+1}\,.
\]

\end{example}

\begin{rem}
Examples~\ref{ex:sympot-R-2} and~\ref{ex:sympot-stdcone}
are isomorphic to each other under a $SL(n+1,\Z)$ transformation.
\end{rem}

Let $s,s':\breve{C}\to\R$ be two symplectic potentials defined on 
the interior of a cone $C\subset\R^{n+1}$. Then
\[ 
K_s = K_{s'} \Leftrightarrow (s-s') +\,\text{const. is 
homogeneous of degree $1$.}
\]

Given $b\in\R^{n+1}$, define
\begin{equation} \label{eq:s_b}
s_b (x) := \frac{1}{2} \left(\langle x,b \rangle 
\log \langle x,b \rangle - \langle x,K_C \rangle
\log\langle x,K_C \rangle \right)\,,
\end{equation}
with $K_C$ given by~(\ref{eq:K_C}).
Then $s:= s_C + s_b$ is such that $K_s = b$. 
If $C$ is good, this symplectic potential $s$ defines a 
\emph{smooth Sasaki} complex structure on the corresponding good
toric symplectic cone $(M, \omega, X, \mu)$ iff 
\[
\langle x,b \rangle > 0\,,\ \forall \, x\in C\setminus\{0\}\,, \ 
\text{i.e. $b\in\breve{C}^\ast$}
\] 
where $C^\ast\subset\R^{n+1}$ is the \emph{dual cone}
\[
C^\ast := \{x\in\R^{n+1}\,:\ \langle v, x \rangle \geq 0\,,\ 
\forall\, v\in C\}\,.
\]
This dual cone can be equivalently defined as
\[
C^\ast = \cap_\alpha \{x\in\R^{n+1}\,:
\ \langle \eta_\alpha, x \rangle \geq 0\}\,,
\]
where $\eta_\alpha \in \Z^{n+1}$ are the primitive generating edges 
of $C$.

\begin{thm}[Martelli-Sparks-Yau~\cite{MSY}]
Any toric Sasaki complex structure $J\in\Ii^\T_S$ on a good toric 
symplectic cone $(M, \omega, X, \mu)$, associated to a 
good moment cone $C\in\R^{n+1}$, is given by a symplectic potential
$s:\breve{C}\to\R$ of the form
\[
s = s_C + s_b + h\,, 
\]
where $s_C$ is the canonical potential, $s_b$ is given by~(\ref{eq:s_b}) 
with $b\in\breve{C}^\ast$, and $h:C\to\R$ is homogeneous of degree $1$ and 
smooth on $C\setminus\{0\}$.
\end{thm}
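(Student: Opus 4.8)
The plan is to follow the same strategy that produced Theorem~\ref{thm2} in the polytope case, but now adapted to the cone and to the homogeneity constraint on $S=\Hess_x(s)$. Recall that by Theorem~\ref{thm:cspot} the canonical potential $s_C=\tfrac12\sum_a\ell_a\log\ell_a$ gives a toric Sasaki complex structure with Reeb vector $K_C=\sum_a\nu_a$, and that for $b\in\breve C^\ast$ the function $s_C+s_b$ (with $s_b$ as in~(\ref{eq:s_b})) is again the symplectic potential of a smooth toric Sasaki complex structure, now with Reeb vector $b$. So for the ``converse''-type direction one only needs to check that adding a degree-one homogeneous $h$, smooth on $C\setminus\{0\}$, preserves all the required properties; for the forward direction one must show that every $J\in\Ii_S^\T$ arises this way.

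First I would address the forward direction. Given $J\in\Ii_S^\T$, write it in cone action-angle coordinates in the form~(\ref{matrixJ}); by the local normal form (Remark~\ref{rmk:don2}) this produces a symplectic potential $s\in C^\infty(\breve C)$ with $S=\Hess_x(s)$ symmetric, positive-definite, and homogeneous of degree $-1$ by~(\ref{hom-1}). Let $b=K_s$ be the (constant) Reeb vector given by Lemma~\ref{lem:reebconst}; since $\|K\|^2=2\langle x,K_s\rangle>0$ on $\breve C$, and $\breve C$ is dense, we get $\langle x,b\rangle\ge 0$ for all $x\in C$, i.e. $b\in C^\ast$, and in fact $b\in\breve C^\ast$ because $\langle x,b\rangle>0$ on the whole interior. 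Now set $h:=s-s_C-s_b$ on $\breve C$. The key computation is that $\Hess(h)$ is homogeneous of degree $-1$ \emph{minus} the degree-$(-1)$ Hessians of $s_C$ and $s_b$, and moreover that the relation displayed just before~(\ref{eq:s_b}) —- namely $K_s=K_{s'}$ iff $(s-s')+\mathrm{const}$ is homogeneous of degree $1$ —- forces $h$ to be homogeneous of degree $1$, since $K_{s_C+s_b}=b=K_s$. It remains to see that $h$ extends smoothly to $C\setminus\{0\}$: this is the crux, and it should follow by the same argument as in the proof of Theorem~\ref{thm2}, namely that $s$ and $s_C+s_b$ have the \emph{same} boundary singularities along each facet (both behave like $\tfrac12\ell_a\log\ell_a$ near the interior of the $a$-th facet because the determinant condition $\Det(S)=(\delta\prod_a\ell_a)^{-1}$ with $\delta$ smooth positive forces the singular part of $s$ to match that of $s_C$ up to smooth terms), so the difference is smooth on $C\setminus\{0\}$, and being homogeneous of degree $1$ it is automatically continuous at $0$.

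For the reverse direction I would start from $s=s_C+s_b+h$ with $b\in\breve C^\ast$ and $h$ homogeneous of degree $1$, smooth on $C\setminus\{0\}$, and check: (a) $S=\Hess(s)$ is homogeneous of degree $-1$, since each of the three summands has this property (for $h$ it is because the Hessian of a degree-$1$ homogeneous function is degree $-1$); (b) $S$ is positive-definite on $\breve C$ —- this is where one uses $b\in\breve C^\ast$ to control the sign of the contribution of $s_b$, exactly as in the discussion preceding the theorem; (c) the determinant condition and smoothness of the resulting complex structure across the facets of $C$ follow from the good-cone hypothesis by the same reduction to the polytope case used in Theorem~\ref{thm2}, noting that adding the smooth-on-$C\setminus\{0\}$, homogeneous term $h$ does not change $\Det(S)$ up to a smooth strictly positive factor. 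Then $s$ defines a toric complex structure on $\breve M$ that, by the good-cone version of Theorem~\ref{thm2}, extends to a genuine $\om$-compatible toric $J$ on $M$; homogeneity of $S$ gives $\Ll_XJ=0$ via~(\ref{hom-1}), and Lemma~\ref{lem:reebconst} gives that $K=JX$ is K\"ahler, so $J\in\Ii_S^\T$.

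The main obstacle is step (b) of the forward direction together with the boundary-regularity claim for $h$: one must verify that the ``logarithmic'' singularities of an arbitrary toric Sasaki potential $s$ along the facets of $C$ are \emph{exactly} those of $s_C$ (not $s_C+s_b$, note —- one should check that $s_b$ is itself smooth on $C\setminus\{0\}$, which it is, being built from $\log\langle x,b\rangle$ and $\log\langle x,K_C\rangle$ with $b,K_C\in\breve C^\ast$, hence both arguments positive on $C\setminus\{0\}$). This is precisely the content that Theorem~\ref{thm2} packages in the polytope setting, and the good-cone condition is what makes the same local analysis at each facet and each face go through; I expect the argument to be a careful transcription of~\cite{Abr3} combined with the homogeneity bookkeeping above, rather than anything essentially new.
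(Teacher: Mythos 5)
The paper itself does not prove this theorem --- it is quoted from Martelli--Sparks--Yau~\cite{MSY} --- so there is no in-paper argument to compare against; your proposal has to be judged on its own terms. Your overall strategy (transcribe the polytope-case analysis of Theorem~\ref{thm2} to the cone, with the homogeneity condition~(\ref{hom-1}) and Lemma~\ref{lem:reebconst} doing the Sasaki bookkeeping) is the right one, and you correctly identify the boundary regularity of $h=s-s_C-s_b$ as the crux. But there is a genuine gap in your derivation of $b\in\breve{C}^\ast$. You argue that $\langle x,b\rangle=\tfrac12\|K\|^2>0$ on $\breve{C}$, that density gives $\langle x,b\rangle\geq 0$ on $C$, and that ``in fact $b\in\breve{C}^\ast$ because $\langle x,b\rangle>0$ on the whole interior.'' That last step is a non sequitur: $\breve{C}^\ast$ is the \emph{interior of the dual cone}, characterized by $\langle x,b\rangle>0$ for every $x\in C\setminus\{0\}$ \emph{including boundary points of $C$}, and strict positivity on $\breve{C}$ only yields $b\in C^\ast\setminus\{0\}$ (take $C=(\R_0^+)^2$ and $b=(1,0)$: then $\langle x,b\rangle=x_1>0$ on $\breve{C}$ but $b\notin\breve{C}^\ast$). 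The correct argument must use that $J$ is a Sasaki structure on \emph{all} of $M$, so that $\|K\|=\|X\|>0$ everywhere and the identity $\|K\|^2=2\langle\mu,b\rangle$ extends continuously over $\mu^{-1}$ of the boundary faces, forcing $\langle x,b\rangle>0$ on $C\setminus\{0\}$. This is not cosmetic: openness of $b$ in the dual cone is exactly what encodes non-degeneracy of the metric over the lower-dimensional orbits, which is the substantive content of the theorem.

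A second problem: your ``reverse direction'' is not part of the statement and, as you formulate it, is false. For an arbitrary $h$ homogeneous of degree $1$ and smooth on $C\setminus\{0\}$, the Hessian of $s_C+s_b+h$ need not be positive definite (replace $h$ by $\Lambda h$ for large $\Lambda$), and $b\in\breve{C}^\ast$ controls only the $s_b$ summand, not $h$; a genuine converse requires the additional positivity and determinant conditions exactly as in Theorem~\ref{thm2}. Two smaller points: the equivalence displayed before~(\ref{eq:s_b}) only gives that $h$ \emph{plus a constant} is homogeneous of degree $1$, so you should note that the constant can be absorbed since potentials are defined up to affine functions; and the claim that the determinant condition forces the singular part of $s$ to match that of $s_C$ along each facet is the right idea but is asserted rather than proved --- in the cone setting one must also handle the lower-dimensional faces and the apex, which is where the good-cone hypothesis actually enters.
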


\subsection{Symplectic Reduction of Symplectic Potentials}

\begin{prop}[Calderbank-David-Gauduchon~\cite{CDG}] \label{prop:spotred}
Symplectic potentials \emph{restrict} naturally under toric symplectic reduction.
\end{prop}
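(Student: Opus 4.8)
The plan is to make the vague statement ``symplectic potentials restrict naturally'' precise and then verify it by a direct computation in cone action-angle coordinates. Concretely, suppose $(M,\omega,X,\mu,J)$ is a toric K\"ahler-Sasaki cone with good moment cone $C\subset\R^{n+1}$, Reeb vector field $K=JX=({\bf 0},K_s)$ with $K_s=b\in\breve{C}^\ast$, and symplectic potential $s\in C^\infty(\breve C)$. Let $B=M//K$ be the K\"ahler reduction, whose moment polytope is the characteristic polytope $P_K=H_K\cap C$ of Definition~\ref{def:char}. The claim to prove is that the symplectic potential of the reduced toric complex structure $J|_\xi$ on $B$ is obtained from $s$ simply by \emph{restriction} to $P_K\subset H_K\cong\R^n$, once one fixes compatible affine/linear identifications $\R^{n+1}\cong\R^n\times\R$ adapted to the splitting $C=P_K\times\R$ from Remark~\ref{rem:char}.

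First I would set up coordinates. Using $\Ll_X J=0$ and homogeneity of degree $-1$ of $S=\Hess_x(s)$ (equation~(\ref{hom-1})), choose a $GL(n+1,\R)$ change of action-angle coordinates, as in the subsection on affine transformations, so that $K_s$ becomes $\tfrac12 e_{n+1}$; then $H_K=\{x_{n+1}=1\}$ and, by the cone condition, $C=\tilde C\times\R_{\ge 0}$ with $\tilde C\subset\R^n$ the cone over $P_K$. The reduction $B=\mu^{-1}(H_K)/K$ then has action-angle coordinates $(x',y')$ with $x'=(x_1,\dots,x_n)\in\breve P_K$, coming from the first $n$ coordinates, since $K=\partial/\partial y_{n+1}$ up to scale and $x_{n+1}$ is the moment map of $K$, so reduction at level $x_{n+1}=1$ kills the pair $(x_{n+1},y_{n+1})$. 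The reduced symplectic form is $dx'\wedge dy'$ and the reduced complex structure $J|_\xi$ has the standard block form~(\ref{matrixJ}) with some $(n\times n)$ matrix $S'$.

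The key step is to identify $S'$ with the appropriate block of $S$ restricted to $x_{n+1}=1$. This is where the result of Calderbank-David-Gauduchon~\cite{CDG} does the work: their formula relates the metric on the reduction to the restricted metric. In our language, because $K=\partial/\partial y_{n+1}$ is Killing and $g_J$ has the form~(\ref{metricG}) with $\Hess_x(s)$, the metric $g_{J|_\xi}$ on $B$ is the quotient metric on $\xi\perp K$, and a direct linear-algebra computation shows that $S'=\Hess_{x'}(s|_{x_{n+1}=1})$: one differentiates the identity expressing $s$ as a function of $(x',x_{n+1})$, uses the $K$-invariance to eliminate the $x_{n+1}$-direction, and reads off that the Hessian of the restriction $s|_{x_{n+1}=1}$ is exactly the Schur-type block of $S$ surviving the reduction. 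Hence the reduced symplectic potential is $s_B(x')=s(x',1)$, i.e. $s$ restricted to $P_K$. Finally I would check consistency with Theorems~\ref{thm1} and~\ref{thm:cspot}: restricting the canonical cone potential $s_C(x)=\tfrac12\sum_a\ell_a(x)\log\ell_a(x)$ to $x_{n+1}=1$ produces $\tfrac12\sum_a \ell_a(x',1)\log\ell_a(x',1)$, which is precisely the canonical polytope potential $s_{P_K}$ of Theorem~\ref{thm1} once the facets of $P_K$ are matched with those of $C$ cut by $H_K$; and restricting the remaining smooth homogeneous-degree-one part gives a function smooth on $P_K$, so the restricted potential has the form demanded by Theorem~\ref{thm2}.

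The main obstacle I expect is the bookkeeping of the affine normalization: to phrase ``restriction'' invariantly one must track how the $GL(n+1,\R)$-transformation normalizing $K_s$ interacts with the lattice $\Z^{n+1}$ and with the labels/normals $\nu_a$ of $C$, since the reduced polytope $P_K$ generally lives in an affine hyperplane rather than a lattice hyperplane (this is exactly the quasifold phenomenon of Remark~\ref{rem:main-2}). Making the identifications $C=P_K\times\R$ and $H_K\cong\R^n$ precise and compatible with the action-angle coordinates on both $M$ and $B$ — so that the word ``restrict'' literally means ``restrict the function of $n+1$ variables to the affine slice'' — is the delicate part; the Hessian computation itself, granted~\cite{CDG}, is essentially the chain rule together with the homogeneity~(\ref{hom-1}).
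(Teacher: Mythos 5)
The paper does not actually prove this proposition: it states it, cites Calderbank--David--Gauduchon~\cite{CDG}, and only spells out its precise meaning (an affine inclusion $P\subset C$ and $s=\tilde{s}|_{\breve{P}}$). So there is no in-paper argument to match your proposal against; what you have written is a genuine proof sketch where the paper defers to a reference, and in substance it is the right one. Your route --- normalize the reducing circle to $\partial/\partial y_{n+1}$ with moment map $x_{n+1}$, compute the quotient metric on the level set, and read off the reduced Hessian --- is exactly how one verifies the statement in the action-angle formalism, and your consistency checks against Theorems~\ref{thm1}, \ref{thm2} and~\ref{thm:cspot} are the right sanity tests.

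One place needs more care than your wording suggests. Because $g_J$ is block diagonal in $(x,y)$, the vectors $\partial/\partial x_i$, $i\le n$, are already horizontal, so the $dx'$-block of the reduced metric is the \emph{plain} upper-left $n\times n$ sub-block $A=(S_{ij})_{i,j\le n}$ of $S$ --- which is literally $\Hess_{x'}\bigl(s(x',1)\bigr)$ by the chain rule --- and \emph{not} a Schur complement of $S$. The Schur complement enters only in the $dy'$-block: the horizontal lifts of $\partial/\partial y'_j$ produce $(S^{-1})_{ij}-(S^{-1})_{i,n+1}(S^{-1})_{n+1,j}/(S^{-1})_{n+1,n+1}$, and the block-inversion identity says this equals $(A^{-1})_{ij}$, which is what makes the two blocks consistent with a single reduced potential $s(x',1)$. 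If one instead identified the reduced $dx'$-metric with the Schur complement $A-BD^{-1}B^{t}$ of $S$ itself, the conclusion ``reduced potential $=$ restriction'' would fail, so the phrase ``Schur-type block of $S$'' should be replaced by this two-step statement. Finally, note that you prove the proposition only for reduction by the Reeb circle of a K\"ahler--Sasaki cone (where you invoke the homogeneity~(\ref{hom-1}) to normalize $K_s$); the proposition as used in the paper also covers reduction of $\C^d$ by higher-dimensional subtori (that is how Theorems~\ref{thm1} and~\ref{thm:cspot} are derived from it), and there no homogeneity is available or needed --- the reducing subtorus is already generated by constant Lie-algebra vectors, and your circle computation iterates, or generalizes verbatim to a block of several $y$-directions, to give the general case.
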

More precisely, suppose $(M_P,\om_P,\mu_P)$ is a toric symplectic reduction 
of $(M_C, \om_C, \mu_C)$. Then there is an affine inclusion $P\subset C$ 
and 
\[
\text{any $\widetilde{J}\in\Ii^\T(M_C,\om_C)$ induces a reduced $J\in\Ii^{\T}(M_P,\om_P)$.}
\]
This proposition says that if 
\[
\text{$\tilde{s}:\breve{C}\to \R$ is a symplectic potential for 
$\widetilde{J}$}
\]
then
\[
\text{$s:=\tilde{s}|_{\breve{P}}:\breve{P}\to \R$ is a symplectic potential 
for $J$}.
\]
This property can be used to prove Theorems~\ref{thm1} and~\ref{thm:cspot}.
It is also particularly relevant for the following class of symplectic
potentials.
\begin{defn} \label{def:BWspot}
Let $P\subset\R^n$ be a convex polytope and 
$C\subset\R^{n+1}$ its standard cone given by~(\ref{eq:stdcone}). 
Given a symplectic potential $s:\breve{P}\to\R$, define its
\emph{Boothby-Wang} symplectic potential 
$\tilde{s}:\breve{C}\to\R$ by
\begin{equation} \label{eq:BW-pot}
\tilde{s}(x,z) := z \, s(x/z) + \frac{1}{2} z\log z
\,,\ \forall\, x\in\breve{P}\,,\ z\in\R^+\,.
\end{equation}
Note that
\[
K_{\tilde{s}} = (0, \ldots, 0, 1) \in \R^{n+1}\,.
\]
\end{defn}

\begin{example}
In general, 
\[
\tilde{s_P} \ne s_C\,.
\]
If $P = \bigcap_{a=1}^d \{x\in\R^n\,:\ \ell_a (x) := 
\langle x, \nu_a \rangle + \lambda_a \geq 0\}$,  consider
\[
s(x) = s_P (x) -\frac{1}{2} \ell_\infty (x) \log \ell_\infty (x)\,,
\]
where $\ell_\infty (x) := \sum_a \ell_a(x) = 
\langle x,\nu_\infty \rangle + \lambda_\infty$. Then
\[
\tilde{s}(x,z) = s_C (x,z) + s_b (x,z) \quad
\text{where $s_b$ is given by~(\ref{eq:s_b}) with $b= (0,\ldots,0,1)$.}
\]
\end{example}

\subsection{Toric K\"ahler-Sasaki-Einstein Metrics}

\begin{prop}
Let $P\subset\R^n$ be a convex polytope and $C\subset\R^{n+1}$ 
its standard cone defined by~(\ref{eq:stdcone}). Given a symplectic 
potential $s:\breve{P}\to\R$, let $\tilde{s}:\breve{C}\to\R$ be its 
Boothby-Wang symplectic potential given by~(\ref{eq:BW-pot}). Then
\[
\widetilde{Sc}(x,z) = \frac{Sc(x/z) - 2n (n+1)}{z}\,.
\]
In particular
\[
\widetilde{Sc} \equiv 0 \Leftrightarrow Sc \equiv 2n(n+1)
\]
and, when this happens, the corresponding toric Sasaki metric has
constant positive scalar curvature $= n(2n+1)$.
Moreover, 
\[
\text{$s$ defines a toric K\"ahler-Einstein metric with 
$Sc\equiv 2n(n+1)$}
\]
\begin{center}iff\end{center}
\[
\text{$\tilde{s}$ defines a toric Ricci-flat K\"ahler metric}
\]
and, when this happens, the corresponding toric Sasaki metric is 
Einstein.
\end{prop}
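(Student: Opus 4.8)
The plan is to work throughout in cone action-angle coordinates $(x,z)\in\breve{C}\subset\R^{n}\times\R^{+}$, to abbreviate $w:=x/z\in\breve{P}$ and $S:=\Hess_{w}s$, and to reduce the assertion about $\widetilde{Sc}$ to the scalar-curvature formula~(\ref{scalarsymp2}) applied to $s$ on $\breve{P}$. First I would differentiate $\tilde{s}(x,z)=z\,s(x/z)+\tfrac12 z\log z$ twice; using $\p w_{i}/\p x_{j}=\delta_{ij}/z$ and $\p w_{i}/\p z=-w_{i}/z$, the chain rule gives $\p\tilde{s}/\p x_{i}=(\p s/\p w_{i})(w)$, $\p\tilde{s}/\p z = s(w)-\langle w,(\p s/\p w)(w)\rangle+\tfrac12\log z+\tfrac12$, and
\[
\Hess_{(x,z)}\tilde{s}=\frac1z
\begin{bmatrix}
S(w) & -S(w)w\\[2pt]
-\bigl(S(w)w\bigr)^{t} & \langle w,S(w)w\rangle+\tfrac12
\end{bmatrix}.
\]
The key point, which makes everything go through cleanly, is that the Schur complement of the $S(w)$-block is exactly the constant $\tfrac12$, since $(S(w)w)^{t}S(w)^{-1}(S(w)w)=\langle w,S(w)w\rangle$ cancels. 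Inverting the block matrix then gives, for the entries $\tilde{s}^{IJ}$ of $(\Hess_{(x,z)}\tilde{s})^{-1}$,
\[
\tilde{s}^{ij}=z\,s^{ij}(w)+\frac{2x_{i}x_{j}}{z}\ \ (1\le i,j\le n),\qquad
\tilde{s}^{i,n+1}=2x_{i},\qquad \tilde{s}^{n+1,n+1}=2z,
\]
and also $\Det(\Hess_{(x,z)}\tilde{s})=\Det(S(w))/(2z^{n+1})$, which I will want later.

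Next I would substitute these entries into the $(n{+}1)$-variable version of~(\ref{scalarsymp2}), $\widetilde{Sc}=-\sum_{I,J}\p_{I}\p_{J}\tilde{s}^{IJ}$. The terms involving the index $n+1$ drop out because $\tilde{s}^{i,n+1}$ and $\tilde{s}^{n+1,n+1}$ are affine; in the surviving block, each pair of $\p_{x}$-derivatives applied to $s^{ij}(x/z)$ contributes a factor $1/z$, so $-\sum_{i,j}\p_{x_{i}}\p_{x_{j}}\bigl(z\,s^{ij}(x/z)\bigr)=\tfrac1z Sc(x/z)$, while $-\sum_{i,j}\p_{x_{i}}\p_{x_{j}}(2x_{i}x_{j}/z)=-\tfrac2z\sum_{i,j=1}^{n}(1+\delta_{ij})=-\tfrac{2n(n+1)}{z}$. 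Adding the two yields $\widetilde{Sc}(x,z)=\bigl(Sc(x/z)-2n(n+1)\bigr)/z$, from which $\widetilde{Sc}\equiv0\Leftrightarrow Sc\equiv 2n(n+1)$ is immediate. That a scalar-flat toric K\"ahler-Sasaki cone of complex dimension $n+1$ forces the associated Sasaki metric on $N=\mu^{-1}(H_{K})$ (here $K=(0,\dots,0,1)$, as noted after Definition~\ref{def:BWspot}) to have constant scalar curvature $n(2n+1)$ I would then read off from the standard relation between the scalar curvature of a K\"ahler cone and that of its Sasaki link, in the normalization $\om=\tfrac12 d(r^{2}\al)$ used here (cf.~\cite{BG1},~\cite{MSY}).

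For the final equivalence I would argue as follows. By Proposition~\ref{prop:spotred} the toric K\"ahler metric on $B=M/\!/K=B_{P}$ obtained by reducing the metric defined by $\tilde{s}$ has symplectic potential $\tilde{s}|_{\breve{P}}=s$ (indeed $\tilde{s}(x,1)=s(x)$), so ``$s$ defines a toric K\"ahler-Einstein metric'' refers precisely to that reduced metric; moreover, since the defining normals of $C$ lie on the hyperplane $\{x_{n+1}=1\}$, one has $c_{1}(M)=0$. Now I would invoke the standard characterization of Ricci-flat K\"ahler cones with $c_{1}=0$: the cone metric is Ricci-flat if and only if its transverse K\"ahler structure --- which, via Proposition~\ref{prop:spotred}, is exactly the reduced toric K\"ahler metric with potential $s$ on $B_{P}$ --- is K\"ahler-Einstein, the Einstein constant being the one for which the transverse (equivalently reduced) scalar curvature equals $2n(n+1)$ by the formula just proved. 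Together with the standard dictionary recalled in Section~\ref{s:tkscones} (Ricci-flat K\"ahler cone $\leftrightarrow$ Sasaki-Einstein link), this gives all the asserted implications. Alternatively, one can check the equivalence by a direct computation of the Ricci potential of $\tilde{s}$, using $\Det(\Hess\tilde{s})=\Det(S(x/z))/(2z^{n+1})$ and the formula for $\p\tilde{s}/\p z$ above to reduce the toric Calabi-Yau equation for $\tilde{s}$ to the toric K\"ahler-Einstein equation for $s$. In either approach the Hessian computation is short --- the Schur-complement identity does the work --- and the real obstacle is the normalization bookkeeping: keeping the constants consistent across the cone, the Sasaki link and the K\"ahler base, and in particular matching the Einstein constant of the reduced metric with the scalar-curvature value $2n(n+1)$, which is exactly why importing the transverse-K\"ahler-Einstein / Sasaki-Einstein correspondence of Boyer-Galicki and Martelli-Sparks-Yau is the cleanest route.
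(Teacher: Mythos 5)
Your proposal is correct and follows essentially the same route as the paper: the paper's proof consists of the one-line assertion that the scalar-curvature relation ``follows by direct application of formula~(\ref{scalarsymp2}) to $s$ and $\tilde{s}$'' (which your Schur-complement computation of $\Hess\tilde{s}$ and its inverse carries out in full, correctly), followed by exactly the combination you use for the last statement, namely Proposition~\ref{prop:spotred} together with the standard Sasaki-geometry equivalence of (Ricci-flat cone) $\leftrightarrow$ (transverse K\"ahler-Einstein with $Sc=2n(n+1)$) $\leftrightarrow$ (Sasaki-Einstein with $Sc=n(2n+1)$) cited from Boyer-Galicki. No gaps; you have simply made explicit the computation the paper leaves to the reader.
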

\begin{proof}
The relation between $Sc$ and $\widetilde{Sc}$ follows by direct
application of formula~(\ref{scalarsymp2}) for the scalar curvature
to the symplectic potentials $s$ and $\tilde{s}$.

The last statement follows from the above symplectic reduction
property of symplectic potentials and a well known fact in Sasaki
geometry (see~\cite{BG1}): on a Sasaki manifold of dimension $2n+1$ 
the following are equivalent:
\begin{itemize}
\item[(i)] the Sasaki metric is Einstein with scalar curvature equal
to $n(2n+1)$;
\item[(ii)] the transversal K\"ahler metric is Einstein with scalar
curvature equal to $2n(n+1)$;
\item[(iii)] the cone K\"ahler metric is Ricci-flat.
\end{itemize}
\end{proof}

\section{New Sasaki-Einstein from Old K\"ahler-Einstein}
\label{s:newold-2}

In 1982 Calabi~\cite{C2} constructed, in local complex coordinates, a general
$4$-parameter family of $U(n)$-invariant extremal K\"ahler metrics, which he
used to put an extremal K\"ahler metric on
\[
H^{2n}_m = \CP (\Oo(-m)\oplus\C) \to \C\CP^{n-1}\,,
\]
for all $n,m\in\N$ and any possible K\"ahler cohomology class. In particular,
when $n=2$, on all Hirzebruch surfaces.

When written in action-angle coordinates, using symplectic potentials, Calabi's 
family can be seen to contain many other interesting cohomogeneity one special
K\"ahler metrics. Besides the ones discussed in~\cite{Abr4} and some of the 
Bochner-K\"ahler orbifold examples presented in~\cite{Abr3}, it also contains a 
$1$-parameter family of K\"ahler-Einstein metrics that are directly 
related to the Sasaki-Einstein metrics constructed by Gauntlett-Martelli-Sparks-Waldram~\cite{GW1,GW2} in 2004.

Consider symplectic potentials $s_A: \breve{P_A} \subset (\R^+)^n \to \R$ of the 
form
\[
s_A(x) = \frac{1}{2}\left(\sum_{i=1}^n \left(x_i + \frac{1}{n+1}\right)
\log \left(x_i + \frac{1}{n+1}\right)  + h_A (r)\right)\,,
\]
where 
\[
r=x_1 + \cdots + x_n\,,
\] 
the polytope $\breve{P_A}$ will be determined below and
\[
h_A''(r) = - \frac{1}{r+\frac{n}{n+1}} + 
\frac{(r+\frac{n}{n+1})^{n-1}}{p_A (r)}\,,
\]
with
\begin{equation}\label{eq:p_A-1}
p_A (r) := \left(r+\frac{n}{n+1}\right)^n 
\left(\frac{1}{n+1}-r\right) - A
\quad\text{and}\quad
0 < A < \frac{n^n}{(n+1)^{n+1}}\,.
\end{equation}
One can check (see~\cite{Abr4}) that this family of symplectic potentials defines 
a $1$-parameter family of local K\"ahler-Einstein metrics with $Sc = 2n(n+1)$.

Let $-a$ and $b$ denote the first negative and positive zeros of $p_A$. Then
\begin{equation}\label{eq:p_A-2}
p_A(r) = (r+a)(b-r)q_A(r)\,,
\end{equation}
where $q_A$ is a polynomial of degree $n-1$,
\[
0<a<\frac{n}{n+1}\,,\quad 0<b<\frac{1}{n+1}\quad\text{and}
\]
\[ 
\left(\frac{n}{n+1}-a\right)^n \left(\frac{1}{n+1}+a\right) = A =
\left(\frac{n}{n+1}+b\right)^n \left(\frac{1}{n+1}-b\right)\,.
\]
From~(\ref{eq:p_A-1}) and~(\ref{eq:p_A-2}) we get that
\begin{align}
p_A'(r) & = - (n+1) r \left(r + \frac{n}{n+1}\right)^{n-1} \notag \\
& = (b-r) q_A (r) - (r+a) q_A (r) + (r+a)(b-r) q_A'(r)\,, \notag
\end{align}
which for $r=-a$ and $r=b$ implies that:
\begin{align}
q_A(-a) & = \frac{(n+1) a}{a+b} \left(\frac{n}{n+1}-a\right)^{n-1} \notag \\
q_A(b) & = \frac{(n+1) b}{a+b} \left(b+\frac{n}{n+1}\right)^{n-1}\,. \notag
\end{align}
This means in particular that
\begin{align}
\frac{(r+\frac{n}{n+1})^{n-1}}{p_A (r)} & =
\frac{(r+\frac{n}{n+1})^{n-1}}{(r+a)(b-r)q_A(r)} \notag \\
& = \frac{\frac{1}{(n+1) a}}{r+a} + \frac{\frac{1}{(n+1) b}}{b-r} +
\frac{\cdots}{q_A (r)}\,.\notag
\end{align}
Hence, the symplectic potential $s_A$ defines a K\"ahler-Einstein metric
with $Sc=2n(n+1)$ on the toric quasifold determined by the polytope 
$P_A\subset\R^n$ defined by the following inequalities:
\[
x_i +\frac{1}{n+1} \geq 0\,, \ i=1,\ldots,n\,,\quad
\frac{1}{(n+1) a} (r+a) \geq 0 \quad\text{and}\quad
\frac{1}{(n+1) b} (b-r) \geq 0\,.
\]

Since $P_A$ is never $GL(n,\R)$ equivalent to a Delzant polytope, these
K\"ahler-Einstein quasifolds do not give rise to any interesting  
K\"ahler-Einstein smooth manifolds. However, they do give rise to
interesting Sasaki-Einstein smooth manifolds. In fact, for suitable
values of the parameter $A$, the polytope $P_A$ determines 
via~(\ref{eq:stdcone}) a standard cone $C_A \subset \R^{n+1}$  that is
$GL(n+1,\R)$ equivalent to one of the good cones $C(k,m)\subset\R^{n+1}$
defined in the Introduction. The Boothby-Wang symplectic potential
\[
\tilde{s}_A : C_A \subset \R^{n+1} \to \R\,,
\]
determined by $s_A$ via~(\ref{eq:BW-pot}), will then define
a Ricci-flat K\"ahler metric on the toric ``quasicone"
determined by $C_A$ and, for these appropriate values of $A$,
also a Ricci-flat K\"ahler metric on the smooth toric symplectic
cone determined by the appropriate $C(k,m)$ and a Sasaki-Einstein
metric on the corresponding smooth toric contact manifold, thus
proving Theorem~\ref{thm:main}.

The facets of $C_A$ are defined by the following set of defining normals:
\begin{align}
\nu'_i & = \left(\ve_i, \frac{1}{n+1}\right)\,,\ i=1,\ldots,n\,;\notag\\
\nu'_a & = \left(\frac{\vd}{(n+1)a}, \frac{1}{n+1}\right)\,;\notag\\
\nu'_b & = \left(-\frac{\vd}{(n+1)b}, \frac{1}{n+1}\right)\,;\notag
\end{align}
where 
\[
\text{$\ve_i\in\R^n\,,\ i=1,\ldots,n$, are the canonical basis vectors and}
\ \vd = \sum_{i=1}^n \ve_i \in \R^n\,.
\]
To suitably express the condition implying that the cone $C_A$ is 
$GL(n+1,\R)$ equivalent to one of the good cones $C(k,m)\subset\R^{n+1}$,
it is convenient to introduce the following auxiliar real parameter:
\[
\la_A := \frac{b}{a}\cdot\frac{n-(n+1)a}{n+(n+1)b}\,.
\]
Note that, as $A$ varies in the open interval $(0, \frac{n^n}{(n+1)^{n+1}})$, 
$\la_A$ assumes all values in the open interval $(0,1)$. 

\begin{prop}
If $\la_A \in \left(0,1\right)$ can be written in the form
\begin{equation} \label{eq:lambda}
\la_A = \frac{kn-m}{n+m}\,,
\end{equation}
with $k,m\in\N$ satisfying
\begin{equation} \label{cond:kse-2}
\frac{(k-1)n}{2} < m < kn\,,
\end{equation}
then $C_A$ is $GL(n+1,\R)$ equivalent to the cone 
$C(k,m)\subset\R^{n+1}$ defined by the following normals:
\begin{align}
\nu_i & = \left(\ve_i, 1\right)\,,\ i=1, \ldots, n-1\,; \notag \\
\nu_{n} & = \left((m+1)\ve_n - \vd, 1\right)\,; \label{eq:normals} \\
\nu_- & = \left( k \ve_n, 1 \right)\,; \notag \\
\nu_+ & = \left(- \ve_n, 1\right)\,. \notag 
\end{align}
\end{prop}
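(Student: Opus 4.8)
The plan is to exhibit an explicit $T\in GL(n+1,\R)$ realizing the equivalence. By the discussion of affine transformations in Section~\ref{s:toric} (applied to the normals of a moment cone in place of those of a polytope), it is enough to find $T$ such that $T^t$ carries each defining normal of $C(k,m)$ to a positive multiple of a defining normal of $C_A$, under some bijection of the two collections of $n+2$ facet normals. The natural bijection to try is $\nu_i\leftrightarrow\nu'_i$ for $i=1,\dots,n-1$, $\nu_n\leftrightarrow\nu'_n$, $\nu_-\leftrightarrow\nu'_a$ and $\nu_+\leftrightarrow\nu'_b$. Once such a $T$ is produced, the identity $\langle Tx,\nu\rangle=\langle x,T^t\nu\rangle$ gives at once $Tx\in C(k,m)\iff x\in C_A$, i.e.\ $T(C_A)=C(k,m)$, provided $T$ is invertible.

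To build $T$, I would first record the (unique up to an overall scalar) linear dependence among each of the two sets of $n+2$ normals. A short computation with the explicit vectors gives, for $C(k,m)$,
\[
(k+1)\bigl(\nu_1+\cdots+\nu_{n-1}+\nu_n\bigr)=(n+m)\,\nu_-+(kn-m)\,\nu_+ ,
\]
and, for $C_A$,
\[
\nu'_1+\cdots+\nu'_n=\frac{a\bigl(n+(n+1)b\bigr)}{a+b}\,\nu'_a+\frac{b\bigl(n-(n+1)a\bigr)}{a+b}\,\nu'_b .
\]
Since $k+1\neq0$, the vectors $\nu_1,\dots,\nu_{n-1},\nu_-,\nu_+$ form a basis of $\R^{n+1}$, so $T^t$ is determined once we declare $T^t\nu_i=\nu'_i$ for $i=1,\dots,n-1$, $T^t\nu_-=\alpha\,\nu'_a$ and $T^t\nu_+=\beta\,\nu'_b$, leaving $\alpha,\beta>0$ free for the moment. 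Using the first relation to write $\nu_n=-\sum_{i=1}^{n-1}\nu_i+\tfrac{n+m}{k+1}\nu_-+\tfrac{kn-m}{k+1}\nu_+$ and applying $T^t$, one obtains $T^t\nu_n$ explicitly; the requirement $T^t\nu_n\in\R^+\nu'_n$ forces the $\ve_1,\dots,\ve_{n-1}$-components of $T^t\nu_n$ to vanish and fixes the ratio between its $\ve_n$-component and its last component. These are two linear conditions on $\alpha,\beta$, and solving them yields
\[
\alpha=\frac{(k+1)\,a\bigl(n+(n+1)b\bigr)}{(n+m)(a+b)},\qquad
\beta=\frac{(k+1)\,b\bigl(n-(n+1)a\bigr)}{(kn-m)(a+b)},
\]
both of which are positive because $0<a<\tfrac{n}{n+1}$ and $b>0$ (the positions of the first zeros of $p_A$, cf.~(\ref{eq:p_A-2})), together with $n+m>0$ and $kn-m>0$ --- the latter being where condition~(\ref{cond:kse-2}) is used.

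This is the point at which the hypothesis on $\la_A$ enters. From the definition $\la_A=\tfrac{b}{a}\cdot\tfrac{n-(n+1)a}{n+(n+1)b}$ one reads off $\alpha/\beta=\dfrac{kn-m}{(n+m)\,\la_A}$, so the assumption $\la_A=\tfrac{kn-m}{n+m}$ is precisely the statement $\alpha=\beta$; after rescaling $T$ by a single positive constant, $T^t$ then matches the two cones' normals in the cleanest way (a common factor on $\nu_-,\nu_+$, and factor $1$ on the remaining facets). In any case $\alpha,\beta$ are positive, the images $\nu'_1,\dots,\nu'_{n-1},\nu'_a,\nu'_b$ again form a basis of $\R^{n+1}$ (checked using $a+b>0$), so $T\in GL(n+1,\R)$, and by the first paragraph $T(C_A)=C(k,m)$.

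The only genuinely laborious step is the computation in the middle paragraph: expressing $\nu_n$ correctly in the basis $\nu_1,\dots,\nu_{n-1},\nu_-,\nu_+$ --- here one must use that each $\vd$-term contributes equally to all $n$ coordinates of $\R^n$, which is exactly what lets the first $n-1$ components of $T^t\nu_n$ be made to vanish --- then solving the resulting $2\times2$ system and recognizing the combination $\tfrac{kn-m}{(n+m)\,\la_A}$. Throughout one must keep track of the sign inequalities $0<a<\tfrac{n}{n+1}$, $b>0$, $n+m>0$, $kn-m>0$ so that every scalar appearing is positive; I do not anticipate any conceptual obstacle beyond this bookkeeping.
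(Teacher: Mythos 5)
Your route --- reducing the existence of the equivalence to matching the unique linear relation among the $n+2$ normals of each cone --- is genuinely different from the paper's and is workable; the two dependence relations you record and the values of $\alpha,\beta$ you extract from the $2\times2$ system are all correct. The paper instead writes down a one-parameter family of maps $T=T_\gamma$ whose transpose automatically sends $\nu'_i\mapsto\nu_i$ for $i=1,\dots,n$, and sends $\nu'_a\mapsto\nu_-$ and $\nu'_b\mapsto\nu_+$ for two \emph{a priori different} values of $\gamma$; the hypothesis~(\ref{eq:lambda}) is precisely the consistency of those two values.

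There is, however, a genuine problem with how your argument uses (or rather fails to use) the hypothesis. As you set things up, $\alpha$ and $\beta$ exist and are positive for \emph{every} admissible $A$ and every $m<kn$, so your first two paragraphs already produce $T\in GL(n+1,\R)$ with $T(C_A)=C(k,m)$ unconditionally --- the hypothesis on $\la_A$ would then be vacuous, which should have been a warning sign. The equivalence the proposition is really after (and the one the paper constructs, as the subsequent transport of the Ricci-flat symplectic potential to the \emph{good} cone $C(k,m)$ requires) must match the distinguished integral normals \emph{exactly}, not up to positive scalars: a factor $c\neq1$ on a normal changes the boundary behaviour $\tfrac12\ell\log\ell$ of the transported potential and hence the integral/orbifold structure of the associated toric symplectic cone. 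Your closing remark that ``after rescaling $T$ by a single positive constant'' the normals match is false: a global rescaling multiplies all of $(1,\dots,1,\alpha,\beta)$ by the same factor and cannot produce $(1,\dots,1,1,1)$ unless $\alpha=\beta=1$ already. What saves your argument is a computation you did not carry out: hypothesis~(\ref{eq:lambda}) is equivalent to $b(n+m)-a(kn-m)=(k+1)(n+1)ab$, and substituting this into your formulas gives $(k+1)a\bigl(n+(n+1)b\bigr)=(n+m)(a+b)$ and $(k+1)b\bigl(n-(n+1)a\bigr)=(kn-m)(a+b)$, i.e.\ $\alpha=\beta=1$ on the nose. With that extra step your proof closes the gap and recovers exactly the paper's conclusion.
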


\begin{proof}
Consider $T\in GL(n+1,\R)$ defined by
\begin{align}
T^t (\ve_i, 0) & = \left(\ve_i - \gamma \ve_n, 0\right)\,,\ i=1, \ldots, n-1\,, \notag \\
T^t (\ve_n, 0) & = \left((m+1-\gamma)\ve_n - \vd, 0\right)\,, \notag \\
T^t (\vo,1) & = \left((n+1)\gamma \ve_n, n+1\right)\,, \notag 
\end{align}
for some $\gamma\in\R$. Then:
\begin{align}
T^t(\nu'_i) & = \nu_i\,,\ i=1, \ldots, n\,; \notag \\
T^t(\nu'_a) & = \nu_- \quad\text{iff}\quad 
\gamma = \frac{k(n+1)a-m}{(n+1)a - n}\,; \notag \\
T^t(\nu'_b) & = \nu_+ \quad\text{iff}\quad 
\gamma = \frac{m-(n+1)b}{n + (n+1)b}\,. \notag 
\end{align}
This implies that $C_A$ is $GL(n+1,\R)$ equivalent to $C(k,m)$ provided
\[
\frac{k(n+1)a-m}{(n+1)a - n} = \frac{m-(n+1)b}{n + (n+1)b}\,,
\]
which is equivalent to~(\ref{eq:lambda}).
\end{proof}

\begin{rem} \label{rem:reeb}
Note that, in the action-angle coordinates associated with the cone
$C(k,m)$, the Reeb vector field of the Ricci-flat K\"ahler-Sasaki metric
is 
\[
K=({\bf 0}, T^t (\vo,1)) \quad\text{with}\quad 
T^t (\vo,1)) = \left((n+1)\gamma \ve_n, n+1\right))\,.
\] 
Since
\[
\gamma = \frac{k(n+1)a-m}{(n+1)a - n} = \frac{m-(n+1)b}{n + (n+1)b}\,,
\]
the (ir)regularity of $K$ is determined by the (ir)rationality of the
admissible values of $a$ or, equivalently, $b$.
\end{rem}

When $k=1$ we have $0<m<n$ and each cone $C(1,m)\subset\R^{n+1}$ is the 
standard cone over the integral Delzant polytope $P(m)\subset\R^n$ defined 
by the following affine functions:
\begin{align}
\ell_i (x) & = \langle x,\ve_i\rangle + 1\,,\ i=1, \ldots, n-1\,; \notag \\
\ell_{n} (x) & = \langle x, (m+1)\ve_n - \vd\rangle + 1\,; \notag \\
\ell_- (x) & = \langle x, \ve_n\rangle + 1\,; \notag \\
\ell_+ (x) & = \langle x, - \ve_n \rangle + 1\,. \notag 
\end{align}
If $n=2$ then $m=1$ and $P(1)\subset\R^2$ is well known to be a polytope for 
the first Hirzebruch surface:
\[
H^{4}_1 = \CP (\Oo(-1)\oplus\C) \to \C\CP^{1}\,,
\]
In fact, one easily checks that $P(m)\subset\R^n$, $0<m<n$, defines a 
smooth compact toric symplectic manifold $(H^{2n}_m, \omega)$ where 
\[
H^{2n}_m = \CP (\Oo(-m)\oplus\C) \to \C\CP^{n-1} \quad\text{and}\quad
[\omega] = 2\pi c_1 (H^{2n}_m)\,.
\]
Hence the Sasaki-Einstein manifold $N^{2n+1}_{1,m}$ is diffeomorphic to 
the corresponding Boothby-Wang manifold, cf. Example~\ref{ex:boothby-wang}, 
which is the circle bundle of the anti-canonical line bundle of 
$H^{2n}_m$. This proves Theorem~\ref{thm:main-2}.

\begin{rem} \label{rem:hirz}
In general, i.e. when $1<k\in\N$, the cones $C(k,m)\subset\R^{n+1}$
are standard cones over labeled polytopes $P(k,m)\subset\R^n$ and
the corresponding manifolds $N^{2n+1}_{k,m}$ are given by an
orbifold version of the Boothby-Wang construction.
\end{rem}

We will now check that, when $n=2$, the cones $C(k,m)\subset\R^3$,
with $k,m\in\N$ satisfying~(\ref{cond:kse-2}) and the simply connected
condition
\[
{\rm gcd} (m+n, k+1) = 1 \,,
\]
are $SL(3,\Z)$ equivalent to the cones $C_{p,q}\subset\R^3$ associated
to the Sasaki-Einstein $5$-manifolds $Y^{p,q}$, $0<q<p$, ${\rm gcd} (q, p) = 1$, 
constructed by Gauntlett-Martelli-Sparks-Waldram~\cite{GW1}. The defining
normals of the cones $C(k,m)\subset\R^3$ are $\nu_1$, $\nu_2$, $\nu_-$ and $\nu_+$ 
defined by~(\ref{eq:normals}) with $n=2$. According to~\cite{MSY}, the cones 
$C_{p,q}\subset\R^3$ have defining normals given by
\[
\mu_1 = (1, p-q-1, p-q)\,,\quad
\mu_2 = (1,1,0) \,,\quad 
\mu_- = (1,0,0) \quad\text{and}\quad
\mu_+ = (1,p,p)\,.
\]
Consider the linear map $T_{k,m}\in SL(3,\Z)$ defined
by the matrix
\[
\begin{bmatrix}
0 & 0 & 1 \\
k-m-1 & -1 & k \\
k-m & -1 & k
\end{bmatrix}\,.
\]
When $k=p-1$ and $m=p+q-2$ we have that
\[
T_{k,m} (\nu_1) = \mu_1 \,, \quad
T_{k,m} (\nu_2) = \mu_2 \,, \quad
T_{k,m} (\nu_-) = \mu_- \quad\text{and}\quad
T_{k,m} (\nu_+) = \mu_+\,,
\]
i.e. $T_{k,m}\in SL(3,\Z)$ provides the required equivalence.


\begin{thebibliography}{000}

\bibitem{Abr1} M.~Abreu, \textit{K\"{a}hler geometry of toric varieties and
extremal metrics}, Internat. J. Math. {\bf 9} (1998), 641--651. 

\bibitem{Abr2} M.~Abreu, \textit{K\"ahler geometry of toric manifolds in
symplectic coordinates}, in "Symplectic and Contact Topology: 
Interactions and Perspectives" (eds. Y.Eliashberg, B.Khesin and
F.Lalonde), Fields Institute Communications 35, 
American Mathematical Society, 2003, pp. 1--24.

\bibitem{Abr3} M.~Abreu, \textit{K\"ahler metrics on toric
    orbifolds}, J. Differential Geom. {\bf 58} (2001), 151--187.

\bibitem{Abr4} M.~Abreu, \textit{Toric K\"ahler Metrics: cohomogeneity one 
examples of constant scalar curvature in action-angle coordinates}, 
to appear in the proceedings of the Eleventh International Conference on 
Geometry, Integrability and Quantization, Varna, Bulgaria, June 5--10, 2009.

\bibitem{Bn} A.~Banyaga, \textit{The geometry surrounding the Arnold-Liouville 
theorem} in ''Advances in geometry´´ (eds. J.-L.Brylinski, R.Brylinski, 
V.Nistor, B.Tsygan and P.Xu), Progress in Mathematics 172,
Birkh\"auser, 1999.

\bibitem{BnM1} A.~Banyaga and P.~Molino, \textit{G\'eom\'etrie des formes de 
contact compl\`etement int\'egrables de type toriques}, in \textit{S\'eminaire 
Gaston Darboux de G\'eom\'etrie et Topologie Diff\'erentielle, 1991–1992 
(Montpellier)}, Univ. Montpellier II, Montpellier, 1993, pp. 1--25.

\bibitem{BnM2} A.~Banyaga and P.~Molino, \textit{Complete integrability in 
contact geometry}, Penn State preprint PM 197, 1996.

\bibitem{BW} W.M.~Boothby and H.C.~Wang, \textit{On contact manifolds}, 
Ann. of Math. (2) {\bf 68} (1958), 721–-734.

\bibitem{BG} C.P.~Boyer and K.~Galicki, \textit{A note on toric contact geometry}, 
J. of Geom. and Phys. {\bf 35} (2000), 288–-298.

\bibitem{BG1} C.P.~Boyer and K.~Galicki, \textit{Sasakian Geometry}, 
Oxford University Press, 2008.

\bibitem{BGL2} D.~Burns, V.~Guillemin, E.~Lerman, \textit{Kaehler metrics on singular toric varieties}, math.DG/0501311.

\bibitem{C2} E.~Calabi, \textit{Extremal K\"{a}hler metrics}, 
in ``Seminar on Differential Geometry'' (ed. S.T.Yau), 
Annals of Math. Studies 102, Princeton Univ. Press, 1982, 259--290.

\bibitem{CDG} D.~Calderbank, L.~David, P.~Gauduchon, 
\textit{The Guillemin formula and K\"ahler metrics on toric symplectic manifolds}, J. Symplectic Geom. {\bf 1} (2003), 767--784. 

\bibitem{De} T.~Delzant, \textit{Hamiltoniens p\'eriodiques et images
    convexes de l'application moment}, Bull. Soc. Math. France {\bf
    116} (1988), 315--339.

\bibitem{D1} S.~Donaldson, \textit{Remarks on gauge theory, complex 
geometry and $4$-manifold topology}, 
in ``Fields Medallists' Lectures'' (eds. M.F. Atiyah and
D. Iagolnitzer), World Scientific, 1997, 384--403.

\bibitem{D4} S.~Donaldson, \textit{Scalar curvature and stability
    of toric varieties}, J. Differential Geom. {\bf 62} (2002), 289--349.

\bibitem{GW1} J.~Gauntlett, D.~Martelli, J.~Sparks, D.~Waldram,
\textit{Sasaki-Einstein metrics on $S\sp 2\times S\sp 3$},
Adv. Theor. Math. Phys. {\bf 8} (2004), 711--734. 

\bibitem{GW2} J.~Gauntlett, D.~Martelli, J.~Sparks, D.~Waldram,
\textit{A new infinite class of Sasaki-Einstein manifolds},
Adv. Theor. Math. Phys. {\bf 8} (2004), 987--1000. 

\bibitem{Gui1} V.~Guillemin, \textit{K\"ahler structures on toric varieties},
J. Differential Geometry {\bf 40} (1994), 285--309.

\bibitem{GS} V.~Guillemin and S.~Sternberg, \textit{Geometric quantization and 
multiplicities of group representations}, Invent. Math. {\bf 67} (1982), 515--538.

\bibitem{Le} E.~Lerman, \textit{Contact toric manifolds}, 
J. Symplectic Geom. {\bf 1} (2003), 785--828.

\bibitem{Le1} E.~Lerman, \textit{Geodesic flows and contact toric manifolds},
in ''Symplectic Geometry of Integrable Hamiltonian Systems´´, Birkh\"auser, 
2003.

\bibitem{Le3} E.~Lerman, \textit{Maximal tori in the contactomorphism groups of 
circle bundles over Hirzebruch surfaces}, Math. Res. Lett. {\bf 10} (2003), 133--144.

\bibitem{Le2} E.~Lerman, \textit{Homotopy groups of $K$-contact toric manifolds}, 
Trans. Amer. Math. Soc. {\bf 356} (2004), 4075--4083.

\bibitem{LeTo} E.~Lerman and S.~Tolman, \textit{Hamiltonian torus actions
on symplectic orbifolds and toric varieties}, Trans. Amer. Math. Soc.
{\bf 349} (1997), 4201--4230.

\bibitem{MSY} D.~Martelli, J.~Sparks, S.-T.~Yau, 
\textit{The geometric dual of $a$-maximisation for toric Sasaki-Einstein 
manifolds}, Comm. Math. Phys. {\bf 268} (2006), 39--65. 

\bibitem{Pr} E.~Prato, \textit{Simple non-rational convex polytopes via symplectic geometry}, Topology {\bf 40} (2001), 961--975. 

\end{thebibliography}
\end{document}